\date{}
\begin{document}

\author{Rizos Sklinos}
\title{On the Generic Type of the Free Group\footnote{The copyright is held by the Association for Symbolic logic}}
\maketitle

\begin{abstract}
We answer a question raised in \cite{PillayGenericity}, that is whether the infinite weight of the generic type of the free group is witnessed 
in $F_{\omega}$. We also prove that the set of primitive elements in finite rank free groups is not uniformly definable. As 
a corollary, we observe that the generic type over the empty set is not isolated. Finally, we show that uncountable free groups are not 
$\aleph_1-$homogeneous. 
\end{abstract}

\section{Introduction}
As pointed out in \cite{PillayForking}, the free group is connected 
and thus it has a unique generic type over any set 
of parameters. In particular, there is a unique generic type over the empty set, 
which we denote as $p_0$. Furthermore, $p_0$ has some nice (or not so nice) properties. In \cite{PillayGenericity}, it 
was shown that $p_0$ has infinite weight. On the way to proving this, it was also shown that 
the realizations in $F_n$ of $p_0$ are exactly the primitives. 

In this paper we mainly explore basic model theoretic properties of $p_0$ and
from them we deduce some useful facts about the free group. In the remainder
of this section, we give some quick background, and definitions of basic notions
around the free groups. In section \ref{NonIsol}, we prove the non uniform definability of the primitives in the
finite rank free groups, and deduce non isolation of $p_0$ and other corollaries. 
Finally, in section \ref{Weight}, we answer some questions raised by Pillay in \cite{PillayGenericity}, 
including whether the infinite weight of $p_0$ is witnessed in $F_{\omega}$. 
We also show that $F_{\kappa}$, for $\kappa>\omega$, is not $\aleph_1-$homogeneous (as a structure).

We will freely use notions from stability theory, such as forking, independent sequence, weight, etc. Also 
notions from stable group theory, such as generic type, generic set, connected group, etc. Our main 
reference for stability is \cite{PillayStability}. Stable groups are studied elegantly, but still in great depth in \cite{PoizatStableGroups}. For the 
unfamiliar reader, there is a quick, dense introduction in \cite{PillayForking},\cite{PillayGenericity}. 

Finally, I would like to thank Zlil Sela and Julia Knight. Zlil Sela for a useful discussion we had at a conference in Southampton, 
and Julia Knight for bringing to my attention paper \cite{MartinoVentura} on the references, thus the Whitehead graph technique. 
Special thanks go to my thesis supervisor, Anand Pillay, for his constant help in both the 
preparation and the subject matter of this paper.

\subsection{Free Groups}
Let us now give some basic facts about the free groups. Let $F_n$ denote the free group on $n$ generators. If 
$E=\{e_1,\ldots,e_n\}$ is the set of generators, we can identify $F_n$ with the set of reduced words in $E\cup E^{-1}$, 
and group operation concatenation followed by reductions (such that the result is a reduced word). 
A word $w=u_1u_2\ldots u_k$, with $u_i\in E\cup E^{-1}$ is {\em reduced} if for all $1\leq i\leq k-1$, $u_i\neq u_{i+1}^{-1}$. 
Moreover, $w$ is {\em cyclically reduced} if it is reduced and $u_1\neq u_k^{-1}$.  
An element of $F_n$ is called {\em primitive} if it belongs to some basis of $F_n$. It is quite clear that primitive elements 
form a single orbit under $Aut(F_n)$. In \cite{PillayGenericity} the following fact was observed.
\begin{fact}
Let $\{e_1,\ldots,e_n\}$ be a basis of $F_n$. Let $m\leq n$, and $k_1,\ldots,k_m$ integers $>1$. Then 
$e_1^{k_1}\ldots e_m^{k_m}$ is not a primitive of $F_n$. 
\end{fact}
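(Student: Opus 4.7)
The plan is to apply Whitehead's classical lemma on primitive elements of a free group: if $w\in F_n$ is cyclically reduced with cyclic length $\ge 2$ and $w$ is primitive (equivalently, of minimal length $1$ in its $\Aut(F_n)$-orbit), then the Whitehead graph $\mathrm{Wh}(w)$, restricted to the vertices appearing in $w$, is either disconnected or admits a cut vertex. I would show that for $w=e_1^{k_1}\cdots e_m^{k_m}$ with all $k_i>1$ this support subgraph is a single cycle of length $2m$, which is connected and has no cut vertex, so $w$ cannot be primitive.

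First I would dispose of the degenerate case $m=1$: then $w=e_1^{k_1}$ is a proper power, and its image in the abelianisation $\mathbb{Z}^n$ is $(k_1,0,\ldots,0)$, which does not extend to a $\mathbb{Z}$-basis of $\mathbb{Z}^n$; hence $w$ cannot belong to a basis of $F_n$. For $m\ge 2$ I would check first that $w$ is cyclically reduced: consecutive letters within each block $e_i^{k_i}$ coincide (and are thus not inverses), every junction $e_ie_{i+1}$ uses distinct generators, and the wrap-around pair $(e_m,e_1)$ also involves different generators.

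Next I would compute $\mathrm{Wh}(w)$ on the $2n$ vertices $e_i^{\pm 1}$ by adding, for every pair of consecutive letters $xy$ in the cyclic word, the edge $\{x^{-1},y\}$. The internal pairs $e_ie_i$ (present since $k_i>1$) contribute the edge $\{e_i^{-1},e_i\}$; each junction $e_ie_{i+1}$ contributes $\{e_i^{-1},e_{i+1}\}$; and the wrap-around $e_me_1$ contributes $\{e_m^{-1},e_1\}$. Discarding multiplicities, the subgraph on the support vertices $\{e_1^{\pm 1},\ldots,e_m^{\pm 1}\}$ is the cycle
\[
e_1 \,-\, e_1^{-1} \,-\, e_2 \,-\, e_2^{-1} \,-\, \cdots \,-\, e_m \,-\, e_m^{-1} \,-\, e_1
\]
of length $2m\ge 4$, while every vertex $e_j^{\pm 1}$ with $j>m$ is isolated. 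A cycle of length $\ge 3$ has no cut vertex, so the support component of $\mathrm{Wh}(w)$ is connected and cut-vertex free, contradicting Whitehead's lemma; hence $w$ is not primitive.

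The main obstacle is stating Whitehead's lemma in a form precise enough to apply directly here: in particular one must be careful that the cut-vertex condition concerns the component containing the support of $w$ (so that the isolated vertices $e_j^{\pm 1}$ for $j>m$ are irrelevant) and that multiplicities of edges do not matter. Once this is set up — for instance as in \cite{MartinoVentura} — the argument reduces to the elementary graph-theoretic fact that a cycle admits no cut vertex.
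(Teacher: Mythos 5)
Your argument is correct, but it takes a genuinely different route from the paper's own proof. The paper disposes of this Fact by peak reduction on word length: $\Aut(F_n)$ is generated by Whitehead automorphisms, no single Whitehead automorphism shortens $w=e_1^{k_1}\cdots e_m^{k_m}$, and hence $w$ cannot lie in the orbit of the length-one word $e_1$. You instead invoke the cut-vertex criterion for Whitehead graphs --- essentially Proposition \ref{WhiteheadGraph} of this paper, the same tool used later for Lemma \ref{NPbig} --- verifying concretely that the support part of $W_w$ is a $2m$-cycle (the edges $\{e_i,e_i^{-1}\}$ exist precisely because $k_i>1$, which is where the hypothesis enters), and you handle $m=1$ separately by abelianisation. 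Both proofs ultimately rest on Whitehead's machinery, and indeed the cut-vertex lemma is itself a consequence of the length-reduction theorem, so neither is more elementary at bottom; but your version is more explicit and checkable (the paper's claim that no Whitehead automorphism reduces the length of $w$ is left unverified, and it silently uses the peak-reduction theorem that minimal length in an orbit is detected by single Whitehead moves), and it unifies this Fact with the technique of Section \ref{NonIsol}. The one point you rightly flag, and which must be handled with care, is that Proposition \ref{WhiteheadGraph} as literally stated (together with the paper's definition of cut vertex) is vacuous when $m<n$, since the isolated vertices $e_j^{\pm 1}$ for $j>m$ make the full graph disconnected and hence trivially give a ``cut vertex''; your restriction of the criterion to the component containing the support of $w$ is the correct fix, and with it the argument is complete.
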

\begin{proof}(Sketch)
$Aut(F_n)$ is generated by Whitehead automorphisms, and no Whitehead automorphism can reduce 
the length of $w=e_1^{k_1}\ldots e_m^{k_m}$. Thus, $w$ and $e_1$ cannot be in the same orbit 
under $Aut(F_n)$.
\end{proof}

For completeness, we give the definition of a Whitehead automorphism, 
as given in \cite[p.31]{LyndonSchupp}.

\begin{definition}
Let $F$ be a free group generated by $X$, then $\tau$ is a Whitehead automorphism 
of $F$ if it is an automorphism of one of the following two kinds
\begin{enumerate}
 \item $\tau$ permutes the elements of $X^{\pm 1}$
 \item for some fixed ``multiplier'' $a\in X^{\pm 1}$, $\tau$ carries 
 each of the elements $x\in X^{\pm 1}$ into one of 
 $x, xa, a^{-1}x,$ or $a^{-1}xa$. 
\end{enumerate}

\end{definition}

We note that in the definition above we require that $\tau$ is an automorphism. Not 
all maps satisfying $(1)$ or $(2)$ are automorphisms.
\\ \\
Tarski around 1945 posed the following question, 
``Do the free groups in more than two generators have the same common theory?''
\\ \\
In \cite{Sel6} Sela proved.
 
\begin{theorem}[Sela]
If $2\leq m\leq n$ then the natural embedding of $F_m$ to $F_n$ is an elementary embedding. 
\end{theorem}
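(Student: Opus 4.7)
The plan is to follow Sela's program for Tarski's problem, establishing an essentially quantifier-elimination-like result: every first-order formula satisfied in $F_n$ by a tuple of parameters from $F_m$ is already satisfied in $F_m$. I would proceed by induction on quantifier complexity, with the heavy lifting concentrated in the analysis of equational (existential) and $\forall\exists$ formulas.

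First, I would develop the theory of \emph{limit groups}, equivalently, finitely generated fully residually free groups. Given a system of equations $\Sigma(\bar x,\bar a)$ with $\bar a\in F_m$, the \emph{Makanin--Razborov diagram} organises all homomorphisms $\langle\bar x,\bar a\mid\Sigma\rangle\to F_n$ as a finite tree of limit-group quotients: every solution factors through one of finitely many limit groups appearing on the diagram, each of which embeds in $F_n$ up to precomposition by modular automorphisms. This reduces the infinite set of solutions over $F_n$ to a finite combinatorial gadget built from finitely many limit groups together with distinguished subgroups.

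Second, to each limit group $L$ in the diagram I would attach its cyclic JSJ decomposition and the associated modular group $\Mod(L)\subseteq\Aut(L)$. The \emph{shortening argument} of Rips--Sela then shows that every homomorphism $L\to F_n$ can, after precomposition by an element of $\Mod(L)$, be assumed ``short''; any divergent sequence of such short homomorphisms subconverges, after rescaling of the Cayley metric, to an isometric action of $L$ on an $\mathbb R$-tree, and the Rips machine decomposes this action into surface, axial, and simplicial pieces, forcing factorisation through a proper quotient. This compactness principle is the engine driving the finiteness of the Makanin--Razborov analysis.

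Third, to pass from equations to $\forall\exists$ sentences, I would construct, for each branch of the diagram and each ``test sequence'' of specialisations, a \emph{formal solution}: a tuple of words in $\bar a$ realising the existential quantifier uniformly over the branch. Since these words lie in $F_m$, they furnish the required witness inside $F_m$ whenever one exists in $F_n$. The general case then follows by induction on quantifier alternation, using Sela's theory of \emph{closures} of graded towers to absorb successive quantifier blocks. The main obstacle is unambiguously the construction and essential uniqueness of the cyclic JSJ decomposition for limit groups together with the shortening argument: the transfer from algebraic sequences of homomorphisms into $F_n$ to isometric actions on $\mathbb R$-trees, and the Rips-style classification of these actions, is the deepest and most technical component, and everything else in the programme is organised around it.
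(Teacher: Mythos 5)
You should be aware that the paper does not prove this statement at all: it is quoted from Sela's solution of the Tarski problem \cite{Sel6} and used as a black box, so there is no in-paper argument to compare yours against. Judged on its own terms, your proposal is an accurate table of contents for Sela's proof --- limit groups and Makanin--Razborov diagrams, the cyclic JSJ decomposition with its modular group and the shortening argument via $\mathbb{R}$-trees and the Rips machine, Merzlyakov-type formal solutions over test sequences, and closures of (graded) towers for deeper quantifier alternations --- but it is not a proof. Every load-bearing assertion you make is itself a major theorem whose proof you omit: the identification of limit groups with finitely generated fully residually free groups, the finiteness of the Makanin--Razborov diagram, the existence and canonicity of the JSJ decomposition, the shortening argument, and the existence of formal solutions each occupy one or more long papers (in Sela's series, and independently in Kharlampovich--Myasnikov's).

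The most serious gap is the final step. Elementarity of $F_m\hookrightarrow F_n$ does not ``follow by induction on quantifier alternation'' in any routine way from the $\forall\exists$ case: the passage from $\forall\exists$ sentences to arbitrary first-order sentences is the content of Sela's quantifier elimination (every definable set is a Boolean combination of $\forall\exists$-definable sets), which requires graded and sculpted resolutions, core resolutions, and a delicate termination argument; it is by common consent the hardest part of the whole programme, not an organisational afterthought. You correctly flag the shortening argument as deep, but the induction you gesture at in your last paragraph is at least as deep, and as stated it would not go through. As a roadmap your answer is essentially correct and well informed; as a proof it establishes nothing beyond what the paper's one-line citation of Sela already asserts.
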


As this answers Tarski's question, we are now free to denote the theory of the free group as $T_{fg}$. 
\\ \\
Sela has also proved in \cite{SelaStability} the following rather astonishing result.

\begin{theorem}[Sela]
$T_{fg}$ is stable.
\end{theorem}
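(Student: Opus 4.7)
The plan is to establish stability by verifying that no first-order formula $\phi(\bar x, \bar y)$ has the order property in $T_{fg}$, using Shelah's criterion: $T$ is stable iff for every $\phi$ there is no infinite sequence $(\bar a_i, \bar b_i)_{i<\omega}$ in some model with $\phi(\bar a_i, \bar b_j)$ holding iff $i \leq j$. The overall strategy is to reduce this combinatorial condition to a structural statement about definable sets in $F_n$, and then bring to bear the geometric machinery developed for systems of equations over free groups.

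First I would try to control, and ideally classify, the definable sets in $F_n$, building on the work of Kharlampovich--Myasnikov and Sela on Diophantine sets. The goal is to show that every definable set is, up to Boolean combinations, the projection of the solution set of a finite system of equations, and that such projections can be parametrized by a \emph{Makanin--Razborov diagram}: a finite tree whose vertices carry limit groups and whose edges are proper quotient maps. Given this, any candidate order-property sequence $(\bar a_i, \bar b_j)_{i,j<\omega}$ can be read off as a family of homomorphisms from a fixed finitely presented group into $F_n$, doubly indexed by $i$ and $j$.

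Next I would apply the Bestvina--Paulin construction to a suitably divergent subsequence of these homomorphisms to extract an $\R$-tree action of a limit group, and then invoke Sela's shortening argument, together with JSJ decompositions, to replace each homomorphism in its modular automorphism orbit by a ``shortest'' representative. The point is that an infinite order pattern would force an infinite strict descent through the Makanin--Razborov diagram, contradicting its finiteness. The technical device of \emph{test sequences} is what translates the raw model-theoretic ordering into a geometric configuration on which this descent can operate.

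The main obstacle is clearly the first step. The geometric description of definable sets is the heart of Sela's work (and of the parallel Kharlampovich--Myasnikov program), and each ingredient---limit groups, formal solutions over a tower, the analysis of $\forall\exists$-sentences, resolutions, completion---is highly nontrivial. Without that apparatus one cannot even translate ``$\phi$ has the order property'' into a statement about divergent sequences of homomorphisms, let alone about $\R$-trees, which is why the theorem resisted purely model-theoretic attack for so long and required the several hundreds of pages Sela devoted to equations in the free group.
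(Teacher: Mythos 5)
The paper does not prove this theorem at all: it is imported as a black box, cited from Sela's \emph{Diophantine geometry over groups VIII} (\cite{SelaStability}), so there is no internal proof to compare yours against. Judged on its own terms, your proposal is not a proof but a roadmap, and you concede as much in your final paragraph: the entire mathematical content is deferred to the ``first step,'' which is precisely the several-hundred-page body of work you would need to reproduce. A proof attempt whose load-bearing step is ``invoke the geometric description of definable sets'' without establishing it has not closed the gap; it has named it.

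Beyond that structural problem, two specific points in your sketch are off. First, definable sets in the free group are \emph{not} Boolean combinations of Diophantine sets (projections of solution sets of finite systems of equations); Sela's quantifier reduction is to Boolean combinations of $\forall\exists$-definable sets, and the passage from Diophantine sets to $\forall\exists$-sets (formal solutions, towers, completions) is itself a major part of the work, not a refinement you can elide. Second, the order property is not refuted by a single strict descent through one Makanin--Razborov diagram: Sela's stability argument is a layered induction (minimal-rank $\forall\exists$-sets first, then general $\forall\exists$-sets, then the Boolean algebra), using uniform bounds on families of ``Duo limit groups'' rather than a one-shot shortening/descent. So even as a sketch of the known proof, the architecture you describe is too coarse to be checked, and as a self-contained argument it is missing essentially everything.
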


Let us also remark here that, from previous work of Poizat \cite{PoizatSuper}, $T_{fg}$ is not superstable.

\section{Non isolation of the generic type} \label{NonIsol}
Our aim in this section is to prove that the primitives are not uniformly definable in finite rank free groups. 
The non isolation of $p_0$ would only be an easy corollary. A basic step, on the way to proving this, is 
to show that the set of non primitives, in any free group, is ``big''. 
In the case of a definable set, this simply means generic.

For the benefit of the reader we explain a few things about the model theoretic
point of view on groups.

A group, $(G,\cdot)$, in the sense of model theory is a structure 
equipped with a group operation, but possibly also with some additional
relations and functions. Even when we do not have explicitly any additional
relations or functions, all the definable subsets, $X\subseteq G^n$, of the group under
consideration will be part of our structure. In the case that all additional
structure is definable by multiplication alone, we speak of a pure group.

We define a stable group to be a group definable in a stable theory. By this we
mean that $(G,\cdot)$ is definable in a model $M$ of the stable theory $T$, and it may be
equipped with some or all of the structure induced from $M$. The simplest case
will be when the group coincides with the ambient structure and the underlying
language is the language of groups. Indeed, this is the case for non abelian free
groups.
\begin{definition}
Let $G$ be a stable group. Let $X$ be a definable subset of $G$.
Then $X$ is left (right) generic if finitely many left (right) translates of $X$, by
elements of $G$, cover $G$.
\end{definition}

As for a definable $X\subseteq G$, $X$ is left generic iff $X$ is right generic, we simply say
generic.

\begin{definition}
Let $G$ be a group. Then $G$ is connected if there is no definable
proper subgroup of finite index.
\end{definition}

Let us note here that connectedness passes to elementarily equivalent groups,
so with an abuse of language we can say that $T_{fg}$ is connected, meaning that
all models of $T_{fg}$ are connected groups.

\begin{definition}
Let $G$ be a stable group. Let $g\in G$, and $A$ a set of parameters
from $G$. Then $tp(g/A)$ is a generic type if every formula in $tp(g/A)$ is generic.
\end{definition}

We recall some useful facts about genericity and connectedness.

\begin{fact}
Let $G$ be a stable group. Let $X$, $Y$ definable subsets of $G$. Then
\begin{itemize}
\item[(i)] if $X\cup Y$ is generic, then one of $X$, $Y$ is generic.
\item[(ii)] $G$ is connected iff there is no definable $X\subseteq G$ such that both $X$ and $G\setminus X$
are generic.
\item[(iii)] $G$ is connected iff there is over any set of parameters a unique generic type
of an element of $G$.
\end{itemize}
\end{fact}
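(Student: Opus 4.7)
These are all foundational facts of stable group theory; my plan is to prove them in the order (i), (ii), (iii), with (i) absorbing the stability content and (ii), (iii) following as logical consequences.

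For (i), the plan is to invoke the existence of a generic type over any parameter set $A$ in a stable group---a standard consequence of stability, proved by a Zorn-type argument on the filter of generic formulas together with translation invariance. In parallel I would establish the equivalence between the two notions of ``generic formula'': the combinatorial one of the definition (finitely many left translates cover $G$) and the model-theoretic one (membership in some generic type). Once this equivalence is in place, (i) is immediate: if $X\cup Y$ is generic, then up to translation it lies in a generic type $p$; since $p$ is complete it contains one of $X$, $Y$, and that disjunct is therefore generic. The main obstacle is precisely the equivalence of the two definitions of genericity, which is where stability really enters.

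For (ii), the ($\Leftarrow$) direction is routine: a proper definable subgroup $H$ of finite index makes $G$ the disjoint union of its finitely many cosets, each a translate of $H$, so $H$ is generic; and $G\setminus H$, being the union of the remaining cosets, is also generic. For ($\Rightarrow$), I would argue contrapositively and appeal to the stabilizer theorem for stable groups: given a definable $X$ with both $X$ and $G\setminus X$ generic, the stabilizers of the corresponding generic types yield a proper type-definable subgroup of bounded index, and from this the connected-component machinery produces a proper definable subgroup of finite index, contradicting connectedness.

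For (iii), I would combine (i) and (ii). Assume $G$ connected. Since $G$ itself is (trivially) generic, (i) implies that for every definable $X$ at least one of $X$, $\neg X$ is generic, and (ii) says that at most one of them is; hence exactly one. So the class of generic formulas over $A$ is a complete type---the unique generic type over $A$ (existence being guaranteed by the same argument as for (i)). Conversely, uniqueness of the generic type over every $A$ forbids the existence of a definable $X$ with both $X$ and $\neg X$ generic (they would extend to two distinct generic types), and then (ii) implies $G$ is connected.
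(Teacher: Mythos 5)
The paper does not prove this Fact at all: it is stated as recalled background from stable group theory (the relevant sources being the references given earlier for stable groups, Poizat's \emph{Stable Groups} and Pillay's texts), so there is no in-paper argument to compare yours against. Your outline is a correct reconstruction of the standard development. You rightly isolate the one place where stability genuinely enters, namely that the non-generic formulas form an ideal, equivalently that every generic formula extends (after translation) to a complete type all of whose formulas are generic; granted that, your derivations of (i), of both directions of (ii), and of (iii) are sound and non-circular. Two small steps are left implicit and are worth writing out if this were to be a full proof: first, in (iii) the claim that the generic formulas over $A$ form a \emph{complete type} needs closure under conjunction, which does follow from (i) and (ii) via $G=(X\cap Y)\cup\neg X\cup\neg Y$; second, the converse direction of (iii) uses not just the existence of generic types but the stronger statement that any given generic formula is contained in one, which is exactly the ideal property again. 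Your stabilizer-theorem route for the forward direction of (ii) is heavier machinery than strictly necessary (once one knows generics over a model are parametrized by $G/G^0$, the statement is immediate), but it is correct.
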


The next important notion is that of a Whitehead graph.

\begin{definition}
Let $a=u_1\ldots u_k$, a word in $F_n=\langle e_1,\ldots,e_n \rangle$. The Whitehead Graph of $a$, $W_a$, is the graph 
with set of vertices, $V(W_a)=\{e_1,\ldots,$ $e_n,e_1^{-1},\ldots,e_n^{-1}\}$, and 
edges joining $u_1$ to $u_2^{-1}$, $u_2$ to $u_3^{-1}$,$\ldots$, $u_{k-1}$ to $u_k^{-1}$,and $u_k$ to $u_1^{-1}$. 
\end{definition}
We note here that the number of edges of $W_a$ equals the length of $a$.
\begin{definition}
Let $G$ be a graph. Then $G$ has a cut vertex, if there is a
vertex, $u$, such that removing $u$ and its adjacent edges leaves the graph disconnected.
\end{definition}

We now give some examples of Whitehead graphs in $F_2=\langle e_1, e_2 \rangle$. The first
two graphs have cut vertices, while the third does not. Also note that in the
first example the graph is already not connected.
\begin{figure}[ht]
\hfill
\begin{minipage}[t]{.3\textwidth}

\begin{graph}(2.5,2.5)(-1.5,0)

\roundnode{1}(0,2)
\roundnode{2}(1,2)
\roundnode{3}(0,1)
\roundnode{4}(1,1)

\edge{1}{2}
\edge{2}{4}
\edge{1}{4}
\loopedge{3}{90}(.3,.3)
 
\freetext(0,2.5){$e_1$}
\freetext(1,2.5){$e_2$}
\freetext(0,0.5){$e_1^{-1}$}
\freetext(1,0.5){$e_2^{-1}$}

\end{graph}

\caption{$e_1e_2^2e_1^{-1}$}

\end{minipage}
\hfill
\begin{minipage}[t]{.3\textwidth}

\begin{graph}(2.5,2.5)(-1.5,0)

\roundnode{1}(0,2)
\roundnode{2}(1,2)
\roundnode{3}(0,1)
\roundnode{4}(1,1)

\bow{1}{4}{0.1}
\bow{1}{4}{-.1}
\bow{3}{2}{0.1}
\bow{3}{2}{-.1}
\edge{1}{3}
 
\freetext(0,2.5){$e_1$}
\freetext(1,2.5){$e_2$}
\freetext(0,0.5){$e_1^{-1}$}
\freetext(1,0.5){$e_2^{-1}$}

\end{graph}

\caption{$(e_1e_2)^2e_1$}

\end{minipage}
\hfill
\begin{minipage}[t]{.3\textwidth}

\begin{graph}(2.5,2.5)(-1.5,0)

\roundnode{1}(0,2)
\roundnode{2}(1,2)
\roundnode{3}(0,1)
\roundnode{4}(1,1)

\edge{1}{3}
\edge{1}{4}
\edge{3}{2}
\edge{4}{2}

\freetext(0,2.5){$e_1$}
\freetext(1,2.5){$e_2$}
\freetext(0,0.5){$e_1^{-1}$}
\freetext(1,0.5){$e_2^{-1}$}

\end{graph}

\caption{$e_1e_2^2e_1$}
\end{minipage}
\end{figure}
\\ \\ \\ \\ \\ \\
The following proposition is a weaker reformulation of Theorem 2.4 in \cite{Stallings} (see also \cite[Theorem 6]{MartinoVentura}) .
 
\begin{proposition}\label{WhiteheadGraph}
Let $F=F_n$, for some $n\geq 2$. Let $a$ be a cyclically reduced primitive of $F$. Then $W_a$ has a cut vertex.
\end{proposition}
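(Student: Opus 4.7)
The strategy is to reduce the claim to Whitehead's peak-reduction theorem and then translate the resulting cyclic length reduction into a cut-vertex statement for $W_a$, following Stallings' treatment.

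First, dispose of the trivial case $|a|_c=1$: then $a=e_i^{\pm1}$, the graph $W_a$ consists of a single edge $\{e_i,e_i^{-1}\}$ together with $2n-2$ isolated vertices, and any isolated vertex is a cut vertex. Henceforth assume $|a|_c\geq 2$. Since $a$ is primitive, its $\Aut(F_n)$-orbit contains $e_1$, of cyclic length $1$, so $a$ is not of minimum cyclic length in its orbit. Whitehead's peak-reduction theorem then yields a Whitehead automorphism $\tau$ of the second kind --- specified by a multiplier $v\in X^{\pm1}$ and a carrying set $A\subseteq X^{\pm1}$ with $v\in A$ and $v^{-1}\notin A$ --- such that $|\tau(a)|_c<|a|_c$.

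The heart of the argument is Whitehead's length-change formula. Writing $a=u_1\cdots u_k$ cyclically reduced, at each corner $(u_i,u_{i+1})$ (which corresponds to the edge $\{u_i,u_{i+1}^{-1}\}$ of $W_a$) the product $\tau(u_i)\tau(u_{i+1})$ may insert a $v^{\pm1}$ at the junction and may trigger a cancellation. A corner-by-corner case analysis, including the wrap-around $(u_k,u_1)$ feeding cyclic reduction, yields
\[
|\tau(a)|_c-|a|_c \;=\; \#\{e\in E(W_a):e\text{ has exactly one endpoint in }A\}\;-\;\deg_{W_a}(v).
\]
The first term counts the inserted $v^{\pm1}$'s surviving reduction at corners whose endpoints straddle the partition $A\sqcup A^c$; the second accounts for the cancellations at the corners of $a$ adjacent to $v$ in $W_a$.

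Combining with $|\tau(a)|_c<|a|_c$ yields the strict inequality $\#\{e:|e\cap A|=1\}<\deg_{W_a}(v)$. To extract a cut vertex I would follow Stallings: choose a length-reducing pair $(A,v)$ minimal in a suitable sense (e.g., minimizing $|A|$, breaking ties by $|E_A|$), and show via an exchange argument that for this minimizer no edge of $W_a\setminus\{v\}$ crosses between $A\setminus\{v\}$ and $A^c\setminus\{v^{-1}\}$ --- otherwise the offending endpoint could be swapped across $A$ to produce a strictly smaller length-reducing pair, contradicting minimality. Both sides of the partition must then be nonempty (the former because $v$ having no neighbor in $A\setminus\{v\}$ would force $|E_A|\geq\deg_{W_a}(v)$; the latter because $A^c=\{v^{-1}\}$ would give $|E_A|=\deg_{W_a}(v^{-1})=\deg_{W_a}(v)$, both contradicting the strict inequality). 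Hence removing $v$ disconnects $W_a$, and $v$ is a cut vertex.

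The main obstacle is the exchange/minimization step: the bare inequality $|E_A|<\deg_{W_a}(v)$ is easy to produce but does not on its own pin $v$ as a graph-theoretic cut vertex, and refining $(A,v)$ to a genuine cut requires careful combinatorial optimization. The length-change bookkeeping itself is conceptually routine but demands attention to the exceptional role of the multiplier $v$ (for which $\tau(v)=v$ and no insertion occurs) and to the wrap-around corner that controls cyclic reduction.
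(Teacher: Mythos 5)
The paper does not actually prove this proposition: it is quoted as a weaker form of Theorem 2.4 of Stallings (see also Martino--Ventura), so there is no in-paper argument to compare yours with. Your outline is the standard combinatorial route to that cited result: peak reduction gives a type (2) Whitehead automorphism $(A,v)$ with $|\tau(a)|_c<|a|_c$, your length-change formula $|\tau(a)|_c-|a|_c=E(A,A^c)-\deg_{W_a}(v)$ is correct, and the elimination of the degenerate cases $A=\{v\}$ and $A^c=\{v^{-1}\}$ is fine.

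The step you defer --- extracting a cut vertex from the inequality $E(A,A^c)<\deg_{W_a}(v)$ by optimizing over length-reducing pairs --- is not merely delicate; it cannot work, because the only input it uses is the existence of a length-reducing pair, and that is strictly weaker than primitivity. Concretely, take $w=e_1^2e_2^{-2}e_1e_2^{-1}e_1e_2^{-1}$ in $F_2$. It is cyclically reduced, and $W_w$ has the edges $\{e_1,e_1^{-1}\}$ and $\{e_2,e_2^{-1}\}$ once each and $\{e_1,e_2\}$ and $\{e_1^{-1},e_2^{-1}\}$ three times each: a $4$-cycle, with no cut vertex. Yet $A=\{e_1,e_2\}$, $v=e_1$ gives $E(A,A^c)=2<4=\deg_{W_w}(e_1)$, and indeed $\tau$ sends $w$ to $e_1e_2^{-1}e_1^{-1}e_2^{-3}$, of cyclic length $6<8$. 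So a cyclically reduced word can admit a strictly length-reducing Whitehead automorphism while its Whitehead graph has no cut vertex; no minimization or exchange argument over such pairs can manufacture one. (A smaller point: even the claimed output of your exchange argument --- no edges between $A\setminus\{v\}$ and $A^c\setminus\{v^{-1}\}$ --- would only make $\{v,v^{-1}\}$ a cut pair; edges from $A\setminus\{v\}$ into $v^{-1}$ may keep $W_a-v$ connected, so ``hence removing $v$ disconnects'' does not follow.) Any correct proof of the cut-vertex lemma has to use primitivity (or separability) inside the combinatorial step itself --- for instance by inducting along the entire peak-reduced chain down to a single letter --- not just via the existence of one length reduction.
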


The next lemma will help us prove that the set of non primitives is ``big'' in any free group.
 
\begin{lemma}\label{NPbig}
Let $F_n =\langle e_1,\ldots, e_n \rangle$, for some $n\geq 2$. There is a finite collection of words
$\{w_{ij} : i,j\leq n \}\subset F_n$, such that for any $a\in F_n$, $w_{ij}\cdot a$ is non primitive for some
$i,j\leq n$.
\end{lemma}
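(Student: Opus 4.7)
The plan is to apply Proposition~\ref{WhiteheadGraph} in its contrapositive form: if the cyclically reduced conjugate of a word has a Whitehead graph with no cut vertex, then the word is not primitive. We therefore build the finite family $\{w_{ij}\}$ so that for every $a \in F_n$, at least one product $w_{ij}\cdot a$ has a cyclically reduced conjugate whose Whitehead graph lacks a cut vertex.

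I would first reduce to the case in which $a$ is cyclically reduced. Since primitivity depends only on the conjugacy class of $w_{ij}\cdot a$, I may replace $a$ by its cyclically reduced conjugate, with the conjugating element handled through a bounded case analysis on the first and last letters $u_1, u_k \in \{e_1^{\pm 1}, \ldots, e_n^{\pm 1}\}$ of that conjugate.

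Next I would design each $w_{ij}$ as a sufficiently long word whose own cyclic Whitehead graph is already 2-vertex-connected on all of $\{e_1^{\pm 1}, \ldots, e_n^{\pm 1}\}$; for example, a word containing, as consecutive subwords, representatives of every edge of the complete graph on these $2n$ vertices. The double index $(i,j)$ would encode the first and last letters of $w_{ij}$. By selecting $(i,j)$ so that the last letter of $w_{ij}$ is not the inverse of $u_1$, and the first letter of $w_{ij}$ is not the inverse of $u_k$, I eliminate cancellation at the junction and ensure that the cyclic closure of $w_{ij}\cdot a$ contributes a genuine new edge rather than disappearing into a cancellation. If $w_{ij}$ is taken long enough that its rich interior survives intact, the Whitehead graph of the cyclic reduction of $w_{ij}\cdot a$ inherits every edge of $w_{ij}$'s graph plus the new junction and closure edges, and hence remains 2-vertex-connected. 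Proposition~\ref{WhiteheadGraph} then forces $w_{ij}\cdot a$ to be non-primitive.

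The main obstacle is the joint calibration of length and boundary: $w_{ij}$ must be long enough that cancellation with $a$ never reaches the 2-connected ``core'' of its Whitehead graph, while the $n^2$ choices of boundary letters of the $w_{ij}$'s must exhaust every configuration of the first/last letters that $a$'s cyclically reduced conjugate can present. Trivial or very short $a$ fall out as small edge cases handled directly.
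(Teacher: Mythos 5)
Your overall strategy is the paper's: build $n^2$ words $w_{ij}$ whose Whitehead graphs are already $2$-connected on all of $\{e_1^{\pm1},\ldots,e_n^{\pm1}\}$, use the double index to control the first and last letters so that no cancellation occurs at the junction with $a$ and the product is cyclically reduced, and then invoke Proposition~\ref{WhiteheadGraph} in contrapositive. (The paper's $w_{ij}=e_i w e_j$ with $w=e_1^2e_n^2e_1e_2^{-1}e_1e_2e_3^{-1}e_2\cdots e_{n-1}e_n^{-1}e_{n-1}$ does exactly this; a Hamiltonian cycle in the Whitehead graph suffices, so your ``all edges of the complete graph'' is overkill but harmless. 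Likewise, once the first cancellation at the junction is blocked, \emph{no} cancellation occurs at all, so the ``long enough that cancellation never reaches the core'' calibration is a non-issue.)

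There is, however, one step that is wrong as stated: the reduction to cyclically reduced $a$. You justify it by ``primitivity depends only on the conjugacy class of $w_{ij}\cdot a$,'' but replacing $a$ by a conjugate $a'=c^{-1}ac$ changes the conjugacy class of the \emph{product}: $w_{ij}a'$ is not conjugate to $w_{ij}a$, so non-primitivity of $w_{ij}a'$ tells you nothing about $w_{ij}a$. Moreover, the cancellation at the junction $w_{ij}\cdot a$ is governed by the first letter of the reduced word $a$ itself, not by the first letter of its cyclically reduced core (e.g.\ for $a=e_2^{-1}e_1e_2$ the core is $e_1$, but the junction cancellation is against $e_2^{-1}$). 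The repair is simply to drop this reduction: take $a$ as a reduced word with first letter $e_l^{\pm1}$ and last letter $e_r^{\pm1}$, and choose $j\neq l$, $i\neq r$. Since each $w_{ij}$ begins and ends in a positive generator, $w_{ij}a$ is then reduced \emph{and} cyclically reduced as written, its Whitehead graph contains the $2$-connected graph of $w_{ij}$, and Proposition~\ref{WhiteheadGraph} applies directly (with $a=1$ as a separate trivial case). With that correction your argument coincides with the paper's proof.
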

\begin{proof}
We will give an explicit description of the $w_{ij}$'s. Let $w$ be the following word 
$e_1^2e_n^2e_1e_2^{-1}e_1$ $e_2e_3^{-1}e_2\ldots e_{n-1}e_n^{-1}e_{n-1}$. 
In pictures, the following graph will be part of the Whitehead graph of $w$.

\begin{graph}(5,4)(-3,-.5)

\roundnode{1}(0,2.5)
\roundnode{2}(1,2.5)
\roundnode{3}(4,2.5)
\roundnode{4}(5,2.5)
\roundnode{5}(5,0.5)
\roundnode{6}(4,0.5)
\roundnode{7}(1,0.5)
\roundnode{8}(0,0.5)

\edge{1}{2}
\edge{3}{4}
\edge{4}{5}
\edge{5}{6}
\edge{7}{8}
\edge{1}{8}

\freetext(0,3){$e_1$}
\freetext(1,3){$e_2$}
\freetext(4,3){$e_{n-1}$}
\freetext(5,3){$e_n$}
\freetext(5,0){$e_n^{-1}$}
\freetext(4,0){$e_{n-1}^{-1}$}
\freetext(1,0){$e_2^{-1}$}
\freetext(0,0){$e_1^{-1}$}
\freetext(3,2.5){$\ldots$}
\freetext(2,0.5){$\ldots$}

\end{graph}

In addition, for each $p, q \leq n$, we define $w_{pq}$ to be the word $e_pwe_q$. 
In total we have $n^2$ such words. We show that these words are the $w_{ij}$'s we wanted. 

Let $a\in F_n$, such that
$a$ starts with $e_l^k$ and ends with $e_r^m$, for some $k,m\in \{1,-1\}$ and $l,r\leq n$. 
Then we can choose $w_{ij}$, such that $i\neq r$ and 
$j\neq l$. So $w_{ij}\cdot a$ is cyclically reduced. As there is no cancellation between
$w_{ij}$ and $a$, $W_{w_{ij}a}$ contains the circle pictured above. Thus, $W_{w_{ij}a}$ does not have a
cut vertex. And, by Proposition \ref{WhiteheadGraph}, $w_{ij}a$ is non primitive.

The only case left is when $a=1$, the identity element. By the argument above, each $w_{ij}$ is non primitive. 
Thus, $w_{11}\cdot 1$ is non primitive, and this completes the proof. 
\end{proof}

The following proposition is an easy consequence of Lemma \ref{NPbig}.

\begin{proposition}\label{FinCov}
Let $F = F_n$, for some $n\geq 2$. Then finitely many translates
of the set of non primitives, $N$, cover $F$.
\end{proposition}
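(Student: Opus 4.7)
The plan is to simply rewrite the conclusion of Lemma \ref{NPbig} as a statement about left translates. That lemma hands us a finite set $\{w_{ij} : i,j \leq n\} \subset F_n$ with the property that for every $a \in F_n$ there exist indices $i,j \leq n$ (depending on $a$) such that $w_{ij}\cdot a \in N$. The condition $w_{ij}\cdot a \in N$ is equivalent to $a \in w_{ij}^{-1}\cdot N$, so the lemma says precisely that every $a \in F_n$ lies in at least one of the $n^2$ left translates $w_{ij}^{-1}\cdot N$.

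Therefore $F_n = \bigcup_{i,j \leq n} w_{ij}^{-1}\cdot N$, exhibiting $F$ as a union of finitely many left translates of $N$, which is exactly the required conclusion. Since the paper's notion of ``finitely many translates cover $G$'' is symmetric in left/right (as noted right after the definition of generic), there is nothing further to check.

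I do not anticipate any real obstacle: the work is all in Lemma \ref{NPbig}, and this proposition is the one-line translation of that combinatorial statement into the group-theoretic covering language that will subsequently be used when we move to definable sets and genericity.
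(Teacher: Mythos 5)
Your proof is correct and is essentially identical to the paper's: both deduce from Lemma \ref{NPbig} that $a \in w_{ij}^{-1}N$ for some $i,j \leq n$, so the $n^2$ translates $w_{ij}^{-1}N$ cover $F$.
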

\begin{proof}
Let $a\in F$. Then, by Lemma \ref{NPbig}, for some $i,j\leq n$, $w_{ij}a\in N$. Thus,
$\bigcup_{i,j\leq n} w_{ij}^{-1}N$ covers $F$.
\end{proof}
 
We next show that primitives are not uniformly definable. 

\begin{proposition}\label{UnDef}
Let $F_n=\langle e_1,\ldots,e_n \rangle$, for some $n\geq 2$. Then there is no formula, $\phi(x,\bar{y})$, and a set 
of parameters $\bar{b}_n$ in $F_n$, 
such that $\phi(x,\bar{b}_n)$ defines the set of primitives in $F_n$, for each $n\geq 2$.
\end{proposition}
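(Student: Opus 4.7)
The plan is to argue by contradiction: suppose there exist a formula $\phi(x,\bar y)$ and, for each $n\geq 2$, a tuple $\bar b_n$ in $F_n$ such that $\phi(F_n,\bar b_n)$ equals the set $P_n$ of primitives of $F_n$.

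First, I would combine Proposition \ref{FinCov} with the connectedness of $T_{fg}$ to deduce that $\phi(x,\bar b_n)$ is non-generic in $F_n$ for every $n$: its negation defines the set $N_n$ of non-primitives, which is generic by Proposition \ref{FinCov}, and by connectedness a definable set and its complement cannot both be generic.

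Second, I would invoke the standard definability of genericity in stable groups: for a fixed formula $\psi(x,\bar y)$, there is an integer $r=r(\psi)$ such that, in any model of our stable theory, whenever $\psi(x,\bar b)$ is generic it can already be covered by $r$ of its left-translates (this follows from definability of the generic together with compactness). Applied to $\neg\phi(x,\bar y)$, this yields, for each $n$, elements $g_{1,n},\ldots,g_{r,n}\in F_n$ with $\bigcup_{i\leq r}g_{i,n}N_n=F_n$, equivalently $\bigcap_{i\leq r}g_{i,n}P_n=\emptyset$, with $r$ independent of $n$.

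The main obstacle is the last step: derive a contradiction for $n\gg r$ by producing a primitive $a\in F_n$ with $g_{i,n}^{-1}a$ primitive for every $i\leq r$. The natural candidate is a basis element $e_j$ not involved in any of the $g_{i,n}$, for then each Nielsen substitution $e_j\mapsto g_{i,n}^{-1}e_j$ is an automorphism of $F_n$, so $g_{i,n}^{-1}e_j$ is primitive. The difficulty is that a priori the $g_{i,n}$'s may collectively involve all $n$ basis elements. I would overcome this either by exploiting $\Aut(F_n)$-invariance of $P_n$ to replace $\bar b_n$ and the translating elements by an Aut-conjugate of bounded support (controlling Whitehead graphs via the cut-vertex characterization of Proposition \ref{WhiteheadGraph}), or by transporting the covering to $F_\omega$ via Sela's elementary chain, where a fresh generator is always available, and then descending via elementarity back to a suitable finite $F_N$ to produce the desired primitive.
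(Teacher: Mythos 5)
Your first two steps are sound: non-genericity of $\phi(x,\bar b_n)$ follows from Proposition \ref{FinCov} plus connectedness exactly as you say, and the uniform bound $r=r(\neg\phi)$ on the number of translates does follow from definability of genericity in a stable group together with compactness. The gap is entirely in the third step, and it is not a technical loose end but the whole difficulty: you must show that for $n\gg r$ no $r$ translates of the non-primitives cover $F_n$, and neither of your proposed mechanisms does this. Fix (a) needs the subgroup $\langle g_{1,n},\ldots,g_{r,n}\rangle$ to be carried by an automorphism into a proper free factor of $F_n$, so that some basis element $e_j$ survives for the Nielsen substitution; but an $r$-generated subgroup of $F_n$ need not lie in any proper free factor (already a single element such as $[e_1,e_2][e_3,e_4]\cdots$ generates a cyclic subgroup contained in no proper free factor), so there is no automorphism confining the witnesses to a bounded set of generators. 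Fix (b) fails for a different and more instructive reason: the hypothesis only says that $\phi(x,\bar b_n)$ defines the primitives \emph{inside} $F_n$. If you pass to $F_\omega$, take witnesses $g_1,\ldots,g_r\in F_N$ and a fresh generator $e_{N+1}$, then $g_i^{-1}e_{N+1}$ is indeed primitive in $F_{N+1}$, but you have no right to conclude $\models\phi(g_i^{-1}e_{N+1},\bar b_n)$. In fact the opposite holds: $g_i^{-1}e_{N+1}$ realizes the unique generic type over $\bar b_n$ (it is a translate, by an element of $F_N$, of a generator independent from $F_N$), and since your own Step 1 shows $\neg\phi(x,\bar b_n)$ is generic, this element satisfies $\neg\phi(x,\bar b_n)$. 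So the covering of $F_\omega$ by the $g_i\cdot\neg\phi(F_\omega,\bar b_n)$ is a \emph{true} consequence of genericity and cannot be contradicted there; the only place a contradiction could live is inside $F_n$ itself, where $\neg\phi$ coincides with non-primitivity, and that is precisely the combinatorial claim you have not proved.

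The paper closes the argument by a different device: it shows $\phi$ has the order property in $F_\omega$. For $i\leq j$ one gets $F_\omega\models\phi(e_i,\bar b_j)$ by elementarity of $F_j\prec F_\omega$, since $e_i$ is primitive in $F_j$; for $i>j$ one gets $F_\omega\models\neg\phi(e_i,\bar b_j)$ because $e_i$ is independent from $\bar b_j\subseteq \mathrm{acl}(e_1,\ldots,e_{i-1})$, hence realizes the generic type over $\bar b_j$, which contains $\neg\phi(x,\bar b_j)$ by your Step 1. This contradicts stability of $T_{fg}$ directly. Note that the phenomenon that sank your Fix (b) --- fresh primitives in larger free groups satisfy $\neg\phi(x,\bar b_j)$ --- is exactly what the paper turns to its advantage. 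If you want to salvage your route, you would need to redirect Step 3 into this order-property argument rather than trying to refute the translate covering.
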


\begin{proof}
Suppose not, and $\phi(x,\bar{y})$, $\{\bar{b}_2, \bar{b}_3,\ldots, \bar{b}_n,\ldots\}$ witness it. We show that $\phi(x,\bar{y})$ has 
the order property in $F_{\omega}$, contradicting directly stability of $T_{fg}$.\\ \\
\textit{Claim.} Let $F_{\omega}=\langle e_i:i<\omega \rangle$. Then $F_{\omega}\models\phi(e_{i},\bar{b}_{j})$ iff $i\leq j$ (for $i,j\geq 2$).\\
\textit{Proof.}\\
$(\Leftarrow)$ Suppose $i\leq j$. Then $F_{j}\models \phi(e_{i},\bar{b}_{j})$. But, $F_{j}\prec F_{\omega}$. 
Therefore $F_{\omega}\models\phi(e_{i},\bar{b}_{j})$.\\
$(\Rightarrow)$ Suppose $i>j$, but $F_{\omega}\models\phi(e_{i},\bar{b}_{j})$. We first show that $e_{i}$ is independent 
from $\bar{b}_{j}$ over $\emptyset$. By \cite[Corollary 2.7(ii)]{PillayForking}, 
$e_{i}$ is independent from $e_1,\ldots,e_{i-1}$ over $\emptyset$. 
Thus $e_{i}$ is independent from $acl(e_1,\ldots,e_{i-1})$ over $\emptyset$. But $\bar{b}_j\subseteq acl(e_1,\ldots,e_{i-1})$. 
Therefore $e_{i}$ is independent from $\bar{b}_{j}$ 
over $\emptyset$. So $tp^{F_{\omega}}(e_{i} / \bar{b}_{j})$ is the unique generic type of $F_{\omega}$ over $\bar{b}_{j}$. And because 
$\phi(x,\bar{b}_{j})\in tp^{F_{\omega}}(e_{i}/ \bar{b}_{j})$, $\phi(x,\bar{b}_{j})$ is generic. But $\lnot\phi(x,\bar{b}_{j})$ 
defines the non primitives in $F_{j}$. So, by Proposition \ref{FinCov}, $\lnot\phi(x,\bar{b}_{j})$ is also generic, contradicting the 
connectedness of $F_{\omega}$.        
\end{proof}

The next theorem is an easy corollary of Proposition \ref{UnDef}.

\begin{theorem} 
The generic type $p_0$ of $T_{fg}$, is non isolated.
\end{theorem}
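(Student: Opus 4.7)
The plan is to argue by contradiction. Suppose $p_0$ is isolated, so that there is a formula $\phi(x)$ over $\emptyset$ with $\phi(x) \vdash p_0$. Since $p_0$ is a complete type, the set defined by $\phi(x)$ in any model of $T_{fg}$ coincides with the set of realizations of $p_0$ in that model. In particular, for each $n \geq 2$, $\phi(F_n)$ equals the set of realizations of $p_0$ in $F_n$, which by the result from \cite{PillayGenericity} recalled in the introduction is precisely the set of primitives of $F_n$.

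Thus a single parameter-free formula would uniformly define the primitives in every $F_n$ with $n \geq 2$, which is the degenerate instance ($\bar{y}$ empty, $\bar{b}_n=\emptyset$) of the configuration refuted by Proposition \ref{UnDef}. Rather than invoking Proposition \ref{UnDef} as a black box, since its order-property proof makes essential use of nontrivial parameters, I would close by rerunning the same final contradiction in this simpler setting: $\phi(x)$ is generic because $\phi(x) \in p_0$; on the other hand $\neg\phi(x)$ defines the non-primitives of $F_2$, so by Proposition \ref{FinCov} finitely many translates of $\neg\phi(x)$ cover $F_2$, making $\neg\phi(x)$ generic as well. Having both $\phi$ and $\neg\phi$ generic contradicts Fact (ii), since $T_{fg}$ is connected.

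The only point requiring care, rather than a real obstacle, is the identification step: one must use that the isolating formula of a complete type defines, in every model, exactly the realizations of that type, so that the uniform definability of ``$p_0$-realizations'' transfers immediately to uniform definability of the primitives across the $F_n$. Once this is in place, the combination of Proposition \ref{FinCov} with the connectedness of $T_{fg}$ (equivalently, Proposition \ref{UnDef} applied with no parameters) delivers the contradiction at once.
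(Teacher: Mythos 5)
Your proof is correct and follows essentially the same route as the paper: isolation of $p_0$ forces a single parameter-free formula to define the primitives in every $F_n$, which the paper then dispatches by citing Proposition \ref{UnDef}. Your only deviation is to inline, for $F_2$, the genericity contradiction (Proposition \ref{FinCov} plus connectedness) that already constitutes the core of the proof of Proposition \ref{UnDef} --- a reasonable precaution given that the parameter-free case is the degenerate instance of that proposition, but not a different argument.
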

\begin{proof}
Suppose, for the sake of contradiction, that $p_0$ is isolated and $\phi(x)$ witnesses it. 
Then for every $G\models T_{fg}$, we have $\phi(G)= p_0(G)$. So in
particular, we have $\phi(F_n) = p_0(F_n)$, for any $n\geq 2$. But, by \cite[Theorem
2.1]{PillayGenericity}, $p_0(F_n)$ is exactly the set of primitive elements of $F_n$. So, $\phi(x)$ uniformly 
defines the primitives (without parameters), contradicting Proposition \ref{UnDef}.
\end{proof}

At this point let us mention a result of 
Perin \cite{ThesisPerin}, which will play a central role in the next section, 
but also useful here.

\begin{theorem}[Perin]\label{Perin}
Let $F=F_n$, for some $n\geq 2$. Let $G$ be an elementary substructure of $F$. Then $G$ is a free factor of $F$.
\end{theorem}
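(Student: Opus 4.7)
My approach would be to invoke Sela's structure theory of elementary embeddings of torsion-free hyperbolic groups, specialised to the case at hand. The plan has two parts: first, show that the elementary inclusion $G \hookrightarrow F_n$ forces $F_n$ to admit a \emph{hyperbolic tower structure} over $G$; second, exploit the fact that $F_n$ is free -- hence contains no surface subgroups of positive genus -- to force this tower to degenerate into a free-product decomposition, whence $G$ is a free factor.

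For the first step, the key tool is a Merzlyakov-type theorem, extended from positive existential formulas to the full first-order theory of the free group. Roughly: if $\bar{g}$ is a finite tuple generating (part of) $G$ and $\bar{e}$ is a basis of $F_n$, the elementarity of $G \prec F_n$ allows one to lift first-order assertions about $\bar{g}$ into formal solutions inside $F_n$. Combined with the relative JSJ decomposition of $F_n$ over $\bar{g}$, one assembles floor-by-floor a hyperbolic tower of $F_n$ over $G$. This step also shows, as a by-product, that $G$ is finitely generated, since the resulting tower has finitely many floors of bounded complexity.

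For the second step, each floor of a hyperbolic tower is either the attachment of a compact surface with boundary to the previous floor (along a free-product or amalgamation pattern), or a free product with an infinite cyclic group. Since $F_n$ is free, no closed surface group of positive genus can embed in it as a vertex group, so a surface floor is impossible. Every floor must therefore be a free-product step, and iterating gives $F_n \cong G * H$ for some free group $H$, which is exactly the conclusion that $G$ is a free factor.

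The main obstacle is the first step: producing the tower structure from the bare hypothesis of elementarity. This is the technical heart of Sela's and Perin's work -- it requires the shortening argument, iterated JSJ decompositions of limit groups, and the theory of strict (``formal'') solutions over parameters, none of which is lightweight. Once the tower exists, the free-group specialisation is the comparatively easy geometric consequence outlined above.
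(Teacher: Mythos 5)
The paper does not prove this theorem at all: it is imported as a black box, attributed to Perin's thesis \cite{ThesisPerin}, and everything in Section 3 of the paper builds on it as a quoted result. So the honest comparison is between your outline and Perin's actual argument, and at the level of architecture your outline is faithful to it: elementarity of $G\prec F_n$ is converted into a hyperbolic tower structure of $F_n$ over $G$ (via quantifier elimination down to $\forall\exists$-formulas, Merzlyakov-type formal solutions over parameters, relative JSJ decompositions and the shortening argument), and then the freeness of $F_n$ is used to collapse the tower to a free-product decomposition. You are also right that the first step is where essentially all of the difficulty lives, and your proposal defers it entirely; as written this is a roadmap, not a proof.

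There is, however, a genuine error in your second step. The surfaces occurring in the floors of a hyperbolic tower are \emph{compact surfaces with non-empty boundary}, so their fundamental groups are free and embed into $F_n$ with no obstruction whatsoever; for instance $F_2$ is the fundamental group of a once-punctured torus. The argument ``$F_n$ contains no closed surface group of positive genus, hence no surface floor can occur'' therefore does not apply: nothing in the definition of a floor involves a closed surface. What actually rules out (or rather, trivializes) surface floors inside a free group is the retraction built into the definition of a hyperbolic floor --- the retraction onto the lower floor must be non-injective on the surface pieces, and one has to argue that in a free ambient group such a retracting surface piece forces the floor to be a free-product step after all (this uses the structure of subgroups of free groups and a separate analysis of the possible images of the surface groups under the retraction). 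This is a real lemma in Perin's proof, not a one-line embedding obstruction, so as it stands your step two has a gap that cannot be patched by the reason you give.
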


The next result was also proved by  
Nies \cite{NiesF2} by slightly different methods. 
Our proof uses the omitting types theorem (see \cite[Theorem 4.2.3,p.125]{MarkerModelTheory}), which we quickly recall. 

\begin{theorem}[Omitting Types Theorem]
Let $\mathcal{L}$ be a countable language, $T$ an $\mathcal{L}-$theory, and $p$ a 
(possibly incomplete) non isolated $n-$type over $\emptyset$. Then, there is a countable 
$\mathcal{M}\models T$ omitting $p$.
\end{theorem}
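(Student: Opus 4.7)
The plan is to build a countable model of $T$ that explicitly omits $p$ via a Henkin-style construction. Introduce countably many fresh constants $C=\{c_i : i<\omega\}$; since $\mathcal{L}$ is countable, so is $\mathcal{L}(C)$, and I enumerate all $\mathcal{L}(C)$-sentences as $\sigma_0,\sigma_1,\ldots$ and all $n$-tuples from $C$ (where $n$ is the arity of $p$) as $\bar d_0,\bar d_1,\ldots$. I would then construct an ascending chain of consistent $\mathcal{L}(C)$-theories $T=T_0\subseteq T_1\subseteq\cdots$, each adding only finitely many sentences, so as to carry out three interleaved families of tasks: (a) decide $\sigma_k$ in $T_{3k+1}$ to produce a complete theory in the limit; (b) if $\sigma_k=\exists x\,\varphi(x)$ is in the theory at stage $3k+2$, add a Henkin witness $\varphi(c)$ for some fresh $c\in C$; and (c) at stage $3k+3$, add a formula $\neg\psi(\bar d_k)$ with $\psi(\bar x)\in p$, thereby preventing $\bar d_k$ from realizing $p$ in the eventual canonical model.

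The delicate step is (c), which is where the non-isolation of $p$ is used. At a given stage, the sentences added so far beyond $T$ form a finite conjunction which I can write as $\theta(\bar d_k,\bar e)$, where $\bar e$ collects the other Henkin constants that appear. The key lemma I would prove is: if $T\cup\{\exists\bar x\,\exists\bar y\,\theta(\bar x,\bar y)\}$ is consistent and $p$ is non-isolated over $\emptyset$, then there exists $\psi(\bar x)\in p$ such that
\[
T\cup\{\exists\bar x\,\exists\bar y\,(\theta(\bar x,\bar y)\wedge\neg\psi(\bar x))\}
\]
is still consistent. For otherwise $T\models\forall\bar x\,(\exists\bar y\,\theta(\bar x,\bar y)\to\psi(\bar x))$ for every $\psi\in p$, meaning the consistent formula $\exists\bar y\,\theta(\bar x,\bar y)$ would imply every formula of $p$ modulo $T$ and hence isolate $p$, contradicting the hypothesis. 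Adding $\neg\psi(\bar d_k)$ then keeps the theory consistent, since any model of $T$ realizing the existential above provides the required interpretations of $\bar d_k$ and $\bar e$.

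With the chain constructed, let $T_\omega=\bigcup_s T_s$; this is a complete Henkin theory extending $T$, and its canonical term model $\mathcal{M}$ (with universe the equivalence classes of closed terms, which in our setting are just the constants in $C$ modulo provable equality) is a countable model of $T$. Every element of $\mathcal{M}^n$ is the image of some $\bar d_k$, and by (c) we have $\mathcal{M}\models\neg\psi(\bar d_k)$ for some $\psi\in p$, so $\bar d_k$ fails to realize $p$. Hence $\mathcal{M}$ omits $p$, as required. The main obstacle I anticipate is the bookkeeping needed to ensure each of the three task families is visited cofinally often for every sentence and every tuple of constants, but once the non-isolation lemma above is isolated, verifying consistency at each step and the omission property at the limit is routine.
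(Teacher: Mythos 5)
Your construction is correct and is precisely the standard Henkin-style proof of the Omitting Types Theorem, including the key use of non-isolation to show that $\neg\psi(\bar d_k)$ can always be added consistently for some $\psi\in p$. The paper itself gives no proof of this statement --- it is quoted as a known result from the cited reference (Marker, Theorem 4.2.3) --- and your argument is essentially the one given there.
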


\begin{corollary}
$T_{fg}$ does not have a prime model.
\end{corollary}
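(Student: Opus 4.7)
The plan is to combine three ingredients that are all now available: the non-isolation of $p_0$ (the theorem just above), Perin's Theorem~\ref{Perin}, and the Omitting Types Theorem stated just before the corollary. Assume toward a contradiction that $T_{fg}$ admits a prime model $M$.

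First, I would apply OTT to the non-isolated $1$-type $p_0$ to obtain a countable $N\models T_{fg}$ that omits $p_0$. Because $M$ is prime, there is an elementary embedding $M\hookrightarrow N$, so $M$ itself omits $p_0$.

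Next, I would pin down $M$ up to isomorphism. Since $M$ is prime it elementarily embeds into $F_2$, and by Perin's theorem the image is a free factor of $F_2$. But $M\models T_{fg}$ is a non-abelian free group, so it cannot be the trivial group or $\Z$; the only remaining free factor of $F_2$ is $F_2$ itself, and therefore $M\cong F_2$. Now $F_2$ realizes $p_0$: by the description of $p_0(F_n)$ recalled in the introduction, any primitive element of $F_2$ (for instance $e_1$) is a realization of $p_0$. This contradicts the conclusion that $M$ omits $p_0$ and closes the argument.

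The only slightly subtle step is the identification $M\cong F_2$, which rests entirely on Perin's theorem together with the fact that models of $T_{fg}$ have rank at least $2$; once this is in place, the collision between a realization of $p_0$ in $M$ and the countable model $N$ omitting $p_0$ produced by OTT is immediate.
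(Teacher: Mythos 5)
Your proof is correct and follows essentially the same route as the paper: identify the prime model with $F_2$ via Perin's theorem, note that it therefore realizes $p_0$, and contradict the Omitting Types Theorem applied to the non-isolated type $p_0$. You even supply a small detail the paper leaves implicit, namely why the free factor given by Perin's theorem must be all of $F_2$.
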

\begin{proof}
Let $\mathcal{A}$ be the prime model of $T_{fg}$, then $\mathcal{A}\prec F_2$. So by Theorem \ref{Perin}, $\mathcal{A}\cong F_2$, therefore 
$\mathcal{A}$ realizes $p_0$. But if the prime model realizes $p_0$, then any model realizes $p_0$. 
As $p_0$ is non isolated, this clearly contradicts the omitting types theorem.
\end{proof}

Let us remark here that, by a result of Nielsen \cite{Nielsen}, the set of primitives of
$F_2$ is definable in $F_2$ (over a set of parameters). More precisely, Nielsen proved
that two elements $a, b \in F_2=\langle e_1, e_2 \rangle$, form a basis of $F_2$ iff $[a, b]$ is a
conjugate of $[e_1, e_2]$ or a conjugate of $[e_2, e_1]$. Thus the following formula defines the primitives
in $F_2 $: 
$$\exists z\exists y([x, y] = [e_1,e_2]^z \lor [x, y] = [e_2, e_1]^z)$$

We now pass to the main part of the paper.

\section{Weight and Homogeneity} \label{Weight}
Before we start we quickly recall the notion of weight. We work in a stable theory $T$, in 
a big saturated model $\mathbb{M}$ (what we usually call the monster model), and $A,B$ denote 
small subsets of $\mathbb{M}$.
\begin{definition}
The preweight of a type $p(x) = tp(a/A)$, $prwt(p)$, is the
supremum of the set of cardinals $\kappa$, for which there exists an $A$-independent set
$\{b_i : i <\kappa\}$, such that $tp(a/Ab_i)$ forks over $A$ for all $i$.\\
The weight of a type $p$, $wt(p)$, is the supremum of $\{prwt(q) : q$ a non forking
extension of $p\}$.
\end{definition}

Using forking calculus one can see that, if $T$ is countable, then for any $a,A$, $wt(a/A)\leq\omega$. One could 
also distil from \cite[Lemma 3.9,p.166]{PillayStability},\cite[Proposition 3.10,p.167]{PillayStability} that, if $wt(a/A) = \omega$, 
then for some $B \supseteq A$, such that $a$ is independent
from $B$ over $A$, there is an infinite independent set, $\{b_i : i < \omega\}$, over $B$, such
that $a$ forks with each $b_i$ over $B$.

This section builds upon the following crucial result, proved in \cite{PillayGenericity}.
\begin{theorem}[Pillay]\label{InfWeight}
The generic type $p_0$ of $T_{fg}$ has infinite weight.
\end{theorem}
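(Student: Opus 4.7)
The plan is to show that $\mathrm{wt}(p_0) \geq n$ for every $n \in \mathbb{N}$, which by definition of weight forces $\mathrm{wt}(p_0) = \omega$. So fix $n$ and work in the monster model $\mathbb{M}$ of $T_{fg}$. The goal is to exhibit a small base $B$, an element $a$ realizing a non-forking extension of $p_0$ over $B$, and a $B$-independent sequence $b_1, \ldots, b_n$ with $a \not\downarrow_B b_i$ for each $i$.

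The natural starting point is the observation already exploited in \cite{PillayForking} that the basis $\{e_i\}$ of $F_\omega$ is an $\emptyset$-independent sequence of realizations of $p_0$; this gives an abundant supply of independent generics inside $\mathbb{M}$. The heart of the matter is then to construct the $b_i$'s so that each one individually reveals partial information about $a$ (forcing forking) while the $b_i$'s carry no shared information over $B$ (preserving independence). First attempts of the form $b_i = a \cdot c_i$ or $b_i = c_i^{-1} a c_i$ for independent generic $c_i$'s fail immediately: from any one such $b_i$ together with $B$ one recovers $a$ exactly, after which every other $b_j$ becomes definable from $B \cup \{b_i\}$, destroying independence. So any naive construction in which each single $b_i$ pins down $a$ is doomed.

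The resolution must therefore be to use witnesses that encode only \emph{partial} information about $a$ -- for example commutators such as $[a,c_i]$, or elements obtained by placing $a$ into carefully chosen cosets of cyclic centralizers -- so that $a \not\downarrow_B b_i$ is a non-algebraic forking relation. One concrete candidate to try is to take independent generics $c_1,\ldots,c_n,d_1,\ldots,d_n$, set $B=\{c_1,\ldots,c_n,d_1,\ldots,d_n\}$, and let $b_i$ be a suitable ``conjugacy witness'' relating $a$ and $d_i$ via $c_i$, exploiting the fact that in a free group $C(a)=\langle a_0\rangle$ is cyclic to ensure the constraint imposed on $a$ is genuinely one-dimensional.

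The main obstacle is verifying simultaneously that each $b_i$ forks with $a$ over $B$ and that the whole sequence $(b_i)$ is $B$-independent. Because generic types in $T_{fg}$ are ``maximally free'', engineering $n$ simultaneous but mutually non-redundant forking constraints is delicate, and the verification will need more than combinatorics in $F_\omega$: one should combine symmetry of forking with the rigidity of cyclic centralizers in free groups and, presumably, Sela's geometric machinery (JSJ decompositions, limit groups) to identify what non-forking genuinely means in $T_{fg}$. Once the right configuration is identified and the forking relation is controlled, the desired bound $\mathrm{prwt}(p_0|B)\geq n$ follows, and letting $n\to\infty$ gives $\mathrm{wt}(p_0)=\omega$.
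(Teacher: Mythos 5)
There is a genuine gap: your proposal never actually produces the witnesses. Everything after the (correct) observation that $b_i=a c_i$ fails is conditional --- ``one concrete candidate to try'', ``presumably, Sela's geometric machinery'', ``once the right configuration is identified'' --- so the two things that constitute the entire content of the theorem, namely an explicit $B$-independent family $(b_i)$ and a verification that $a$ forks with each $b_i$, are both missing. Moreover your heuristics steer away from the mechanism that makes the proof work. The argument of \cite{PillayGenericity} (and the variant given here as Lemma \ref{WitinF}, which works over $B=\emptyset$ inside $F_\omega$) does not need commutators, centralizers, conjugacy witnesses, or JSJ decompositions, and it does not need to ``identify what non-forking genuinely means in $T_{fg}$''. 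It needs only two one-directional facts: (a) the realizations of $p_0$ in $F_n$ are exactly the primitives, and if $a,b$ are independent realizations of the generic type then so is $a^{-1}b$ (Fact \ref{ForkCal}); (b) a word of the form $e_1^{k_1}\cdots e_m^{k_m}$ with all $k_i>1$ is never primitive (the Whitehead-automorphism fact). Together these give a purely combinatorial \emph{certificate of forking}: if $a^{-1}b$ is visibly non-primitive, then $a$ and $b$ cannot be independent.

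Concretely, take $a=e_1$ and $b_i=e_1e_2^3\cdots e_i^3e_{i+1}^2$. Each finite initial segment $b_1,\dots,b_n$ extends to a basis of $F_{n+1}$ (append $e_{n+1}$), so the $b_i$ form an independent set of realizations of $p_0$ over $\emptyset$; and if $e_1$ were independent from $b_i$ then $e_1^{-1}b_i=e_2^3\cdots e_i^3e_{i+1}^2$ would be generic, hence primitive in $F_{i+1}$, contradicting (b). Note that this also answers your structural worry: $b_i$ need not ``pin down'' $a$ or encode partial information about it in any definable sense; one only needs to certify that $\tp(a/b_i)$ is not the generic type over $b_i$, and non-primitivity of $a^{-1}b_i$ does exactly that. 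Your outline, as written, cannot be completed without supplying precisely this construction and criterion, neither of which appears in it.
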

The method was to find for every $n\geq 2$, a realization, $g$, of $p_0$ in $F_n$, and an independent set 
of realizations, $b_1,\ldots,b_n$, of $p_0$ in $F_n$, such that $g$ forks with each $b_i$.   
Furthermore, in \cite{PillayGenericity} it was observed that, by a compactness argument, one can find $a\models p_0$ and $(b_i:i<\omega)$ 
an independent sequence of realizations of $p_0$ in a model $G$, such that $a$ forks with each $b_i$ over the empty set.  
Therefore, a natural question is whether we can find such elements in $F_{\omega}$. 

In the rest of the section we show that Theorem \ref{Perin} cannot be extended to include $F_{\omega}$. 
Moreover, we answer in the affirmative the question mentioned above, and finally we show 
that $F_{\kappa}$, for $\kappa>\omega$, is not $\aleph_1-$homogeneous.

We now mention a fact, observed in \cite[Fact 1.9]{PillayGenericity}, that we will use through out the section.
 
\begin{fact}\label{ForkCal}
Let $G$ be a connected stable group. Let $A=\{a_i:i\in I\}$ be an independent set of realizations 
of the generic type of $G$ over the empty set, in $G$. Let $\tau$ be one of the following maps:
\begin{itemize}
\item[(i)]for some permutation $\pi$ of $I$, $\tau(a_i) = a_{\pi(i)}$ or $a^{-1}_{\pi(i)}$.
\item[(ii)]for some fixed ``multiplier'' $a_i\in A$, $\tau$ fixes $a_i$ and carries each of the elements $a_j\in A$
into one of $a_j, a_ja_i, a^{-1}_ia_j$, or $a^{-1}_ia_ja_i$.
\end{itemize}

Then $\tau$ is an elementary map in the sense of $G$. In particular $\{\tau(a_i):i\in I\}$ 
is an independent set of realizations of the generic type of $G$ over the empty set.
\end{fact}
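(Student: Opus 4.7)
The plan is to reduce both clauses to the stationarity of $p_0$: since $G$ is connected, $p_0$ has a unique non-forking extension to any set of parameters, and consequently any two $\emptyset$-independent indexed families of realizations of $p_0$ of the same cardinality realize the same complete type over $\emptyset$. It therefore suffices, in each case, to prove that $\{\tau(a_i):i\in I\}$ is again an $\emptyset$-independent family of realizations of $p_0$, for then $\tau$ is automatically elementary.

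For clause (i), I would use the fact that in any connected stable group the inversion map $g\mapsto g^{-1}$ preserves genericity of definable sets (a cover of $G$ by left translates of $X$ becomes a cover by right translates of $X^{-1}$), and hence preserves $p_0$ by uniqueness. Thus each $a_i^{\pm 1}$ realizes $p_0$. Since $a_i$ and $a_i^{-1}$ are $\emptyset$-interdefinable, algebraic closures of finite subfamilies are unchanged, and the $\emptyset$-independence of $\{a_i\}$ transfers to $\{a_{\pi(i)}^{\pm 1}\}$ under any permutation $\pi$.

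For clause (ii), fix the multiplier $a_i$ and let $B=\{a_j:j\in I\setminus\{i\}\}$. The $\emptyset$-independence of $\{a_i\}\cup B$ implies that each $a_j\in B$ realizes $p_0|a_i$, the unique (by connectedness) non-forking extension of $p_0$ to $a_i$. Each of the four maps $x\mapsto x$, $x\mapsto xa_i$, $x\mapsto a_i^{-1}x$, $x\mapsto a_i^{-1}xa_i$ is an $a_i$-definable bijection of $G$ that sends generic definable sets over $a_i$ to generic definable sets over $a_i$, hence preserves $p_0|a_i$ by uniqueness. Therefore each $\tau(a_j)$ still realizes $p_0|a_i$, in particular realizes $p_0$ and is $\emptyset$-independent from $a_i$. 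To upgrade this to $\emptyset$-independence of the full image, note that for each $j\neq i$, $\tau(a_j)\in dcl(a_i,a_j)$ and conversely $a_j\in dcl(a_i,\tau(a_j))$; hence $acl(a_i,\{\tau(a_j):j\in J\})=acl(a_i,\{a_j:j\in J\})$ for every finite $J\subseteq I\setminus\{i\}$, which transfers the $a_i$-independence of $B$ to $\tau(B)$. Combining this $a_i$-independence of $\tau(B)$ with the $\emptyset$-independence of each $\tau(a_j)$ from $a_i$, via transitivity and symmetry of forking, one deduces that $\{a_i\}\cup\tau(B)=\{\tau(a_k):k\in I\}$ is $\emptyset$-independent.

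The main obstacle I expect is the last forking-calculus step in (ii): splitting off $a_i$ as a parameter, working over it, and then reassembling the resulting family into a single $\emptyset$-independent set. The crucial input is the uniqueness of $p_0|a_i$, which is where connectedness of $G$ enters decisively; without connectedness an $a_i$-definable bijection could permute several generic types over $a_i$ and the argument would collapse.
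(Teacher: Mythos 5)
Your argument is correct. The paper itself gives no proof of this statement---it is quoted as Fact 1.9 of \cite{PillayGenericity}---but your reduction to the stationarity of the generic type (via connectedness) together with the invariance of genericity under inversion, translation and conjugation is exactly the standard argument behind that citation, and the compressed forking-calculus step at the end of clause (ii) does go through: ordering the family with $a_i$ first, each $\tau(a_j)$ is independent from $a_i$ over $\emptyset$ and from the other images over $a_i$, so transitivity yields $\emptyset$-independence of the whole image.
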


Also, the next result was proved in \cite{PillayGenericity}.

\begin{theorem}[Pillay]\label{MaxInd}
Let $F=F_n$, for some $n\geq 2$. Then every maximal independent sequence 
of realizations of $p_0$ in $F$ is a basis of $F$.
\end{theorem}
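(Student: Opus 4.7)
The plan is to prove that the subgroup $H:=\langle a_1,\dots,a_k\rangle$ generated by a maximal independent sequence $(a_1,\dots,a_k)$ of realizations of $p_0$ in $F_n$ is an elementary substructure of $F_n$; then Perin's theorem (Theorem~\ref{Perin}) makes $H$ a free factor of $F_n$, and maximality forces $H=F_n$, which is exactly the statement. Since realizations of $p_0$ in $F_n$ are precisely the primitives, each $a_i$ is primitive. One also needs $k\geq 2$: given any basis $(e_1,\dots,e_n)$ of $F_n$, the pair $(e_1,e_2)$ is an independent pair of realizations of $p_0$, and since $\Aut(F_n)$ acts transitively on primitives and automorphisms are elementary (hence preserve forking), any single primitive $a_1$ admits an independent partner, ruling out $k=1$.

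The main input is stationarity of $p_0$, the unique generic type in a connected stable group: this yields $\tp(a_1,\dots,a_k/\emptyset)=\tp(e_1,\dots,e_k/\emptyset)$, and in particular the $a_i$ freely generate $H$, so $H\cong F_k$. I would then upgrade the inclusion $H\hookrightarrow F_n$ to an elementary embedding by Tarski--Vaught: any $\bar h\in H$ equals $\bar w(\bar a)$ for some tuple of group words $\bar w$, so if $F_n\models\exists x\,\phi(x,\bar w(\bar a))$, the type equality gives $F_n\models\exists x\,\phi(x,\bar w(\bar e))$; Sela's theorem $F_k\prec F_n$ then provides a witness $u(\bar e)\in F_k$; and pulling back through the type equality gives $u(\bar a)\in H$ as a witness for the original formula. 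The main technical obstacle is precisely this transfer of elementarity through a type equality, though once stationarity is in hand it is a clean Tarski--Vaught argument.

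Once $H\prec F_n$, Theorem~\ref{Perin} gives $F_n=H*K$ with $K$ free. If $K$ were non-trivial, pick any basis element $b$ of $K$; then $(a_1,\dots,a_k,b)$ is an initial segment of a basis of $F_n$. Any basis is in the $\Aut(F_n)$-orbit of the standard basis, which is an independent sequence of realizations of $p_0$ by Pillay's proof of Theorem~\ref{InfWeight}, and subsequences of independent sequences remain independent, so $(a_1,\dots,a_k,b)$ is an independent sequence of realizations of $p_0$ properly extending $(a_1,\dots,a_k)$, contradicting maximality. Hence $K=1$, $H=F_n$, and $(a_1,\dots,a_k)$ is a basis of $F_n$.
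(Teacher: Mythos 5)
Your argument is correct, but note that the paper does not actually prove Theorem~\ref{MaxInd}: it is quoted as a result of Pillay from \cite{PillayGenericity}, so there is no internal proof to compare against, and your route is in any case a genuinely different one from Pillay's original. You assemble the proof entirely from results the paper already has on the table: stationarity of $p_0$ gives $\tp(a_1,\dots,a_k)=p_0^{(k)}=\tp(e_1,\dots,e_k)$, hence $H=\langle a_1,\dots,a_k\rangle\cong F_k$; the Tarski--Vaught transfer through this type equality together with Sela's $F_k\prec F_n$ gives $H\prec F_n$ (this is the same trick the paper itself uses for the corollary about $F_\omega$); Perin's Theorem~\ref{Perin} then makes $H$ a free factor, and maximality kills the complement because any basis of $F_n$ is an independent set of realizations of $p_0$. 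Each step checks out, including the necessary reduction to $k\geq 2$ via transitivity of $\Aut(F_n)$ on primitives (needed since $F_1\not\equiv F_n$, so neither Sela's nor Perin's theorem would apply to a cyclic $H$). Two points you leave implicit but which your argument does cover: a maximal independent sequence must be finite, since any finite independent subsequence of size $k\geq 2$ generates a rank-$k$ free factor and so $k\leq n$; and the independence of $(a_1,\dots,a_k,b)$ in the last step uses invariance of forking under automorphisms, which the paper records explicitly. What your approach buys is brevity and the stronger intermediate conclusion that the subgroup generated by any independent tuple of generics is an elementary free factor; the price is reliance on the heavy machinery of Perin's theorem, which Pillay's original argument does not invoke.
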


Let $\kappa$ be a cardinal (maybe infinite), we denote with $p_0^{(\kappa)}$ the type of $\kappa$ independent realizations of $p_0$. 
As $p_0$ is stationary this is a good definition. 
So, in other words, the above theorem says that if $(a_1,\ldots,a_n)\models p_0^{(n)}$ in $F_n$, 
then $F_n=\langle a_1,\ldots,a_n \rangle$. And every maximal independent set of realizations of $p_0$
in $F_n$ has cardinality $n$. 

One might expect that Theorem \ref{MaxInd} extends to $F_{\omega}$. As a matter 
of fact, it follows from the proof, that every finite independent set 
of realizations of $p_0$ in $F_{\omega}$ extends to a basis of $F_{\omega}$. 
This is not the case for infinite indepedent realizations of $p_0$, as we show:

\begin{lemma} 
There is an independent set of realizations of $p_0$ in $F_{\omega}$, that does not extend to a basis of $F_{\omega}$.
\end{lemma}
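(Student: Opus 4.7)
The plan is to exhibit an explicit sequence and separately verify independence and non-extendability. I will set $a_i = e_i e_{i+1}^2$ for $i \geq 1$, where $(e_j)_{j<\omega}$ is the fixed basis of $F_\omega$. Each $a_i$ is a primitive of $F_\omega$: since $a_i e_{i+1}^{-2} = e_i$, the substitution $e_i \mapsto a_i$ (fixing all other $e_j$) extends to an automorphism of $F_\omega$, so $a_i$ lies in the $\Aut(F_\omega)$-orbit of $e_1$ and hence realizes $p_0$.

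For independence, I would show every finite subset $\{a_{i_1}, \ldots, a_{i_k}\}$ with $i_1 < \cdots < i_k$ is part of a basis of $F_{i_k + 1}$. The candidate is its union with $\{e_j : 1 \leq j \leq i_k + 1,\ j \notin \{i_1, \ldots, i_k\}\}$, of cardinality $i_k + 1$. A descending induction on $l$ recovers each $e_{i_l}$ from $a_{i_l}$ and $e_{i_l + 1}$ via $e_{i_l} = a_{i_l} e_{i_l + 1}^{-2}$, so the candidate set generates $F_{i_k + 1}$, and by Hopficity it is a basis. Subsets of bases of $F_{i_k + 1}$ are independent sets of realizations of $p_0$ in $F_{i_k + 1}$, and since $F_{i_k + 1} \prec F_\omega$ by Sela's theorem, this independence lifts to $F_\omega$.

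The main content is non-extendability, for which I plan to abelianize. Let $\pi : F_\omega \to F_\omega^{\mathrm{ab}} \cong \bigoplus_{j<\omega} \mathbb{Z}\bar{e}_j$; then $\bar{a}_i := \pi(a_i) = \bar{e}_i + 2\bar{e}_{i+1}$. Set $H = \langle \bar{a}_i : i \geq 1 \rangle$. The key computation is
$$F_\omega^{\mathrm{ab}} / H \cong \mathbb{Z}[\tfrac{1}{2}],$$
which I would establish via the homomorphism $f : F_\omega^{\mathrm{ab}} \to \mathbb{Z}[\tfrac{1}{2}]$ sending $\bar{e}_i \mapsto (-1/2)^{i-1}$: direct inspection gives $f(\bar{a}_i) = 0$ and $f$ is surjective, while the reverse inclusion $\ker f \subseteq H$ follows by a descending induction on the support, since $f(v) = 0$ forces the top nonzero coefficient $v_N$ to be even, so that $v - (v_N/2)\bar{a}_{N-1}$ has strictly smaller support and still lies in $\ker f$. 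Since $\mathbb{Z}[\tfrac{1}{2}]$ is not free abelian, $H$ is not a direct summand of the free abelian group $F_\omega^{\mathrm{ab}}$. If $\{a_i\}$ extended to a basis $B$ of $F_\omega$, then $\pi(B)$ would be a $\mathbb{Z}$-basis of $F_\omega^{\mathrm{ab}}$ containing $\{\bar{a}_i\}$, exhibiting $H$ as a direct summand with free complement $\langle \pi(B) \setminus \{\bar{a}_i\}\rangle$ -- a contradiction. The main obstacle is the quotient identification; everything else is formal.
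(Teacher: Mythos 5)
Your proof is correct, and while you pick exactly the same set $\{e_ie_{i+1}^2 : i<\omega\}$ and verify its independence the same way the paper does (finite subsets sit inside bases of $F_{n+1}$, and independence transfers along $F_{n+1}\prec F_\omega$), your argument for non-extendability is genuinely different. The paper stays inside forking calculus: it shows the set is a \emph{maximal} independent set of realizations of $p_0$ (Claim II: adjoining any realization $a$ of $p_0$ creates a dependence, via Theorem \ref{MaxInd} and Fact \ref{ForkCal}) while simultaneously failing to generate $F_\omega$ (Claim I: $e_1\notin\langle b_i\rangle$, again via Fact \ref{ForkCal} and the non-primitivity of $e_2^3\cdots e_{n+1}^3e_{n+2}^2$); together these rule out any basis containing the set. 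You instead abelianize and show that $H=\langle \bar e_i+2\bar e_{i+1}\rangle$ has quotient $\Z[\tfrac12]$, which is not free abelian, so $H$ cannot be generated by part of a $\Z$-basis of $F_\omega^{\mathrm{ab}}$; your computation ($f(\bar a_i)=0$, surjectivity, and $\ker f\subseteq H$ by killing the top coefficient with $\bar a_{N-1}$) checks out. Your route is more elementary and self-contained for the key step --- it needs no Whitehead-graph input and no forking beyond the independence claim itself --- whereas the paper's route yields the strictly stronger conclusion that the set is a \emph{maximal} independent set of realizations of $p_0$ that is not a basis (a sharper failure of Theorem \ref{MaxInd} in $F_\omega$), and it recycles the machinery (Fact \ref{ForkCal}, Theorem \ref{MaxInd}) that drives the rest of the section. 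Either proof suffices for the corollary that $\langle b_i : i<\omega\rangle\prec F_\omega$ is not a free factor.
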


\begin{proof}
Let $B=\{e_1e_2^2,e_2e_3^2,\ldots,e_ne_{n+1}^2,\ldots\}=\{b_i:i<\omega\}$. Now, because 
$\langle e_1e_2^2,\ldots, e_ne_{n+1}^2,$ $e_{n+1} \rangle =F_{n+1}$, we have that $e_ne_{n+1}^2$ is independent 
from $\{e_1e_2^2,$ $\ldots, e_{n-1}e_n^2\}$. Therefore, $B$ is an independent set of realizations of $p_0$. 
\\ \\
\textit{Claim I}. $e_1\not\in \langle b_i : i < \omega \rangle$.\\
\textit{Proof}. Suppose not, then we may assume that $e_1\in \langle b_1,\ldots, b_n \rangle$, for some $n$. 
Iterating Fact \ref{ForkCal} we have $b_1b_2\ldots b_{n+1}$ is independent from $b_1, b_2, \ldots , b_n$ over $\emptyset$. 
Thus $b_1b_2 \ldots b_{n+1}$ is independent from $acl(b_1, b_2,\ldots, b_n)$ over $\emptyset$. Therefore, 
$b_1b_2 \ldots b_{n+1}$ is independent from $e_1$ over $\emptyset$. But 
$b_1b_2\ldots b_{n+1}=e_1e_2^3\ldots e_{n+1}^3e_{n+2}^2$, so using Fact \ref{ForkCal} 
$e_1^{-1}\cdot e_1e_2^3\ldots e_{n+1}^3e_{n+2}^2=e_2^3\ldots e_{n+1}^3e_{n+2}^2$ is primitive, a contradiction.
\\ \\
\textit{Claim II}. Let $a\in F_{\omega}$, such that $a\models p_0$. Then $\{a\}\cup B$ is a dependent set.\\
\textit{Proof}. Suppose not, then $\{a, b_i : i < \omega\}$ is an infinite independent set of realizations of $p_0$. 
We may assume that $a\in F_{n+1}$. Thus, $\{a, b_1, . . . , b_n\}$ is a maximal
independent set of realizations of $p_0$ in $F_{n+1}$, so, by Theorem \ref{MaxInd}, a basis of
$F_{n+1}$. But by our assumption $b_{n+1}$ is independent from $a, b_1, \ldots, b_n$ over $\emptyset$.
Thus, $b_{n+1}$ is independent from $acl(a, b_1, \ldots, b_n)$ over $\emptyset$. Therefore, 
$b_{n+1}$ is independent from $e_{n+1}$ over $\emptyset$. And $b_{n+1}=e_{n+1}e_{n+2}^2$, so 
using Fact \ref{ForkCal} $e_{n+1}^{-1}\cdot e_{n+1}e_{n+2}^2=e_{n+2}^2$ is primitive, a contradiction.
\\ \\
Therefore, $B$ is a maximal independent set of realizations of $p_0$ in $F_{\omega}$ that
is not a basis of $F_{\omega}$.

\end{proof}

Now we get the next easy corollary.

\begin{corollary}
There is $G\prec F_{\omega}$, such that $G$ is not a free factor of $F_{\omega}$.
\end{corollary}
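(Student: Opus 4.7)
The candidate will be $G := \langle b_i : i < \omega \rangle$, with $B = \{b_i : i < \omega\}$ the set from the preceding lemma. The first order of business is to recognise that $G$ is freely generated by $B$: the remark following Theorem~\ref{MaxInd} records that every finite independent set of realizations of $p_0$ in $F_\omega$ extends to a basis of $F_\omega$, so $\{b_1,\ldots,b_n\}$ sits inside some basis of $F_\omega$, whence $G_n := \langle b_1,\ldots,b_n\rangle$ is a rank-$n$ free factor of $F_\omega$; passing to the directed union, $G$ is free on $B$.

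To see that $G \prec F_\omega$, I would fix $n$, pick $C_n \subset F_\omega$ so that $\{b_1,\ldots,b_n\} \cup C_n$ is a basis of $F_\omega$, and let $\sigma_n$ be the automorphism of $F_\omega$ sending this basis to the standard basis $\{e_i : i<\omega\}$. Then $\sigma_n(G_n) = \langle e_1,\ldots,e_n\rangle$, which is elementary in $F_\omega$ by Sela's theorem, and pulling the elementary equivalence back through $\sigma_n$ gives $G_n \prec F_\omega$. The $G_n$'s then form an increasing chain of elementary substructures of $F_\omega$, so the Tarski--Vaught chain lemma yields $G = \bigcup_n G_n \prec F_\omega$. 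I expect this step---upgrading each $G_n$ from a free factor to an elementary subgroup via an automorphism of $F_\omega$ and then lifting to the infinite-rank case by the chain argument---to be the main obstacle.

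For the non-free-factor part, suppose for contradiction that $G$ were a free factor of $F_\omega$, say $F_\omega = G * H$. Since $B$ is a basis of $G$, the union of $B$ with any basis of $H$ would be a basis of $F_\omega$, so $B$ itself would extend to a basis of $F_\omega$; but this is precisely what the preceding lemma forbids. Hence $G$ is a proper elementary substructure of $F_\omega$ that is not a free factor, showing that Theorem~\ref{Perin} cannot be extended to $F_\omega$.
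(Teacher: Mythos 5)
Your proof is correct, and its non-free-factor half coincides with the paper's: both reduce to the fact, established in the preceding lemma, that $B$ does not extend to a basis of $F_{\omega}$. Where you genuinely diverge is in proving $G\prec F_{\omega}$. The paper argues via types: since $B$ is a basis of $G$ and an independent set of realizations of $p_0$ in $F_{\omega}$, and since $p_0^{(\omega)}$ is a complete type (stationarity of $p_0$), one gets $\tp^G(\bar b)=\tp^{F_{\omega}}(\bar b)$; as every element of $G$ is a word in the $b_i$'s, every formula with parameters from $G$ reduces to a formula about $\bar b$, and elementarity follows. You instead write $G$ as the union of the chain $G_n=\langle b_1,\ldots,b_n\rangle$ of finite-rank free factors of $F_{\omega}$, upgrade each $G_n$ to an elementary substructure by conjugating with an automorphism of $F_{\omega}$ and invoking Sela's theorem, and finish with the Tarski--Vaught chain argument. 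Both routes are sound; yours is more group-theoretic and makes visible the general principle that a directed union of finite-rank free factors of $F_{\omega}$ (of rank $\geq 2$) is an elementary substructure, while the paper's is shorter and isolates the stability-theoretic content, namely the completeness of $p_0^{(\omega)}$. One cosmetic point: start your chain at $n=2$, since $G_1\cong\Z$ is abelian and certainly not elementary in $F_{\omega}$; the union over $n\geq 2$ is still $G$.
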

\begin{proof}
By the previous lemma, we only need to show that 
$G=\langle e_1e_2^2,e_2e_3^2,$ $\ldots,e_ne_{n+1}^2,\ldots \rangle=\langle b_i : i<\omega \rangle$ 
is an elementary substructure of $F_{\omega}$. First note that 
$tp^G(e_1e_2^2,$ $e_2e_3^2,\ldots,e_ne_{n+1}^2,\ldots)$ $=tp^{F_{\omega}}(e_1e_2^2,e_2e_3^2,\ldots,e_ne_{n+1}^2,\ldots) = 
p_0^{(\omega)}$, this is because $G$ is free with basis $\{e_1e_2^2,e_2e_3^2,$ $\ldots,e_ne_{n+1}^2,\ldots\}$. 
Now the proof is straightforward.
Let $a_1,\ldots,a_m\in G$, $F_{\omega}\models \phi(a_1,\ldots,a_m)$ iff 
$F_{\omega}\models \phi(t_1(b_1,\ldots,b_k),\ldots, t_m(b_1,\ldots,b_k))$ iff $F_{\omega}\models\psi(b_1,\ldots,b_k)$ 
iff $G\models\psi(b_1,\ldots,b_k)$ iff $G\models\phi(a_1,\ldots,a_m)$.
\end{proof}

We now turn to the main question of the section. That is, whether we can find $a\models p_0$ and 
$(b_i:i<\omega)\models p_0^{(\omega)}$ in $F_{\omega}$, such that $a$ forks with each $b_i$ over the empty set. 
As a matter of fact, the next lemma could also serve as an alternative proof of Theorem \ref{InfWeight}. 

\begin{lemma}\label{WitinF}
Let $F_{\omega}=\langle e_i : i < \omega \rangle$. Then there exists $\{b_i : i < \omega\}$ an
independent set of realization of $p_0$ in $F_{\omega}$, 
such that $e_1$ forks with $b_i$ over $\emptyset$, for all $i <\omega$. 
\end{lemma}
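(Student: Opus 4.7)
I will write down an explicit family. Let $(d_i)_{i\ge 1}$ be pairwise coprime integers with each $d_i\ge 2$ (for instance, take $d_i$ to be the $i$-th prime), and set
\[
b_i \;:=\; e_1\,e_{i+1}^{d_i},\qquad i\ge 1.
\]
Each $b_i$ is primitive, hence realizes $p_0$: inside the rank-$2$ free factor $\langle e_1,e_{i+1}\rangle$ of $F_\omega$ the Nielsen move $(e_1,e_{i+1})\mapsto(b_i,e_{i+1})$ is a basis change (since $e_1=b_i\,e_{i+1}^{-d_i}$), so $\{b_i,e_{i+1}\}\cup\{e_j:j\notin\{1,i+1\}\}$ is a basis of $F_\omega$.

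To prove that $e_1$ forks with $b_i$ over $\emptyset$, the key observation is that $\langle e_1,b_i\rangle=\langle e_1,e_{i+1}^{d_i}\rangle$ and its image in the abelianization $F_\omega/[F_\omega,F_\omega]\cong\bigoplus_{j<\omega}\Z$ is $\Z e_1\oplus d_i\Z e_{i+1}$. The quotient of $\bigoplus_{j<\omega}\Z$ by this image contains a $\Z/d_i$-summand, so the image is not a direct summand, and hence $\langle e_1,b_i\rangle$ cannot be a free factor of $F_\omega$. On the other hand, if $(e_1,b_i)$ realized $p_0^{(2)}$, Theorem~\ref{MaxInd} applied inside an $F_n\prec F_\omega$ containing the pair would extend it to a basis of $F_n$, forcing $\langle e_1,b_i\rangle$ to be a rank-$2$ free factor of $F_\omega$, a contradiction. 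So $(e_1,b_i)$ is dependent and $e_1$ forks with $b_i$.

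Finally, to show that $\{b_i:i\geq 1\}$ is an independent set of realizations of $p_0$, it suffices to show that for each $n$ the tuple $(b_1,\dots,b_n)$ extends to a basis of $F_{n+1}$; then $\langle b_1,\dots,b_n\rangle$ is a rank-$n$ free factor of $F_{n+1}$, hence of $F_\omega$ (free factors are transitive and $F_{n+1}$ is itself a free factor of $F_\omega$), and the $b_i$'s realize $p_0^{(n)}$. The abelianizations of $b_1,\dots,b_n$ form an $n\times(n+1)$ integer matrix whose $n\times n$ minors equal, up to sign, $\prod_{i=1}^n d_i$ and $\prod_{i\neq k} d_i$ for $k=1,\dots,n$; pairwise coprimality of the $d_i$'s forces the gcd of these minors to be $1$, so there is a row $c\in\Z^{n+1}$ completing the matrix to one of determinant $\pm 1$. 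Realizing $c$ as the abelianization of an element of $F_{n+1}$ and applying Schreier's rank formula, $\{b_1,\dots,b_n,c\}$ must generate $F_{n+1}$ and be a basis.

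The delicate point is this last step: the condition $d_i\ge 2$, which is exactly what forces $e_1$ to fork with $b_i$ via the abelianization obstruction, is in tension with the joint independence of the $b_i$'s, and pairwise coprimality of the $d_i$'s is precisely what reconciles the two requirements.
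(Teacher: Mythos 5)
Your family $b_i=e_1e_{i+1}^{d_i}$ is not an independent set of realizations of $p_0$, so the construction fails at its core. Already $b_1$ and $b_2$ are dependent: if they were independent, then by Fact~\ref{ForkCal}(ii) (with multiplier $b_1$) the element $b_1^{-1}b_2=e_2^{-d_1}e_3^{d_2}$ would again realize $p_0$, hence be primitive in $F_3$; but a word $e_2^{-d_1}e_3^{d_2}$ with $d_1,d_2\geq 2$ is never primitive (apply the fact from the introduction that $e_1^{k_1}\cdots e_m^{k_m}$ with all $k_i>1$ is non-primitive, after the automorphism $e_2\mapsto e_2^{-1}$). Equivalently, $\{b_1,b_2\}$ does not extend to a basis of $F_3$: the automorphism $e_1\mapsto e_1e_2^{-d_1}$ carries the pair to $(e_1,\,e_1e_2^{-d_1}e_3^{d_2})$, and a Nielsen move reduces the question to primitivity of $e_2^{-d_1}e_3^{d_2}$ in $\langle e_2,e_3\rangle$, which fails. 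Coprimality of the $d_i$ cannot rescue this: the obstruction to primitivity of $a^pb^q$ for $|p|,|q|\geq 2$ is a Whitehead-graph phenomenon and has nothing to do with $\gcd(p,q)$.

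The logical gap that let this through is in your last step: unimodular completability of the abelianization matrix does not imply that a tuple extends to a basis of a free group. A rank-$(n+1)$ subgroup of $F_{n+1}$ can surject onto $\Z^{n+1}$ and still be proper; it then has infinite index, so Schreier's formula says nothing. (The direction of the abelianization argument you use to show $\langle e_1,b_i\rangle$ is not a free factor is the valid one: failure on abelianization is an obstruction, success is not a certificate.) The forking half of your argument --- each $b_i$ is primitive and $e_1$ forks with $b_i$ --- is correct. To repair the independence half you need words that genuinely extend to bases; the paper takes the nested words $b_n=e_1e_2^3\cdots e_n^3e_{n+1}^2$, for which $\langle b_1,\ldots,b_n,e_{n+1}\rangle=F_{n+1}$ by an explicit telescoping computation (from $b_n$ and $e_{n+1}$ one recovers $e_n=b_{n-1}^{-1}b_ne_{n+1}^{-2}$, and so on down to $e_1$), while $e_1^{-1}b_n=e_2^3\cdots e_n^3e_{n+1}^2$ is non-primitive, giving the forking with $e_1$ exactly as in your argument.
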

\begin{proof}
Let $B=\{e_1e_2^2,e_1e_2^3e_3^2,e_1e_2^3e_3^3e_4^2,\ldots,e_1e_2^3\ldots e_n^3 e_{n+1}^2,\ldots\}$. Then,  
$\langle e_1e_2^2$, $e_1e_2^3e_3^2,e_1e_2^3e_3^3e_4^2,$ $\ldots,e_1e_2^3\ldots e_n^3 e_{n+1}^2,e_{n+1}\rangle=F_{n+1}$. Thus,  
$B$ is an independent set of realizations of $p_0$. 
Furthermore, $e_1$ forks with every element of $B$. Suppose not, then $e_1$ is independent from 
$e_1e_2^3\ldots e_n^3 e_{n+1}^2$, so, by Fact \ref{ForkCal}, $e_1^{-1}e_1e_2^3\ldots e_n^3 e_{n+1}^2=e_2^3\ldots e_n^3 e_{n+1}^2$ 
is primitive, a contradiction.
\end{proof}

Now using the invariance of forking, that is whether or not $a$ forks with $b$ over $C$, depends on $tp(a,b,C)$, we show 
that $F_{\kappa}$, for $\kappa > \omega$, is not $\aleph_1-$homogeneous.

\begin{proposition}
Let $F_{\kappa}=\langle e_i : i<\kappa \rangle$, for some $\kappa>\omega$. Then $F_{\kappa}$ is not $\aleph_1-$homogeneous.
\end{proposition}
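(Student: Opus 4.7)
The plan is to witness a direct failure of $\aleph_1$-homogeneity: I produce two countable tuples of the same type together with an element extending the first, and argue that no element of $F_\kappa$ can extend the second to produce the same type.

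First, I invoke Lemma \ref{WitinF} to obtain the sequence $B = (b_i)_{i < \omega}$ in $F_\omega \subseteq F_\kappa$, with $b_i = e_1 e_2^3 \cdots e_i^3 e_{i+1}^2$, an independent sequence of realizations of $p_0$ such that $e_1$ forks with every $b_i$ over $\emptyset$. I then consider $\bar{e} = (e_{i+1})_{i < \omega}$; being part of a basis of $F_\kappa$, this is also an independent sequence of realizations of $p_0$. Since $p_0$ is stationary, $p_0^{(\omega)}$ is a complete type over $\emptyset$, so $\tp(B) = \tp(\bar{e}) = p_0^{(\omega)}$.

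Next, I suppose for contradiction that $F_\kappa$ is $\aleph_1$-homogeneous. Taking $c = e_1$, there must exist $d \in F_\kappa$ such that $\tp(d, \bar{e}) = \tp(e_1, B)$ under the alignment $b_i \leftrightarrow e_{i+1}$. Since forking depends only on the type over $\emptyset$ (invariance), and $e_1$ forks with each $b_i$ over $\emptyset$ by Lemma \ref{WitinF}, the element $d$ must fork with every $e_{i+1}$ over $\emptyset$.

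Finally, I derive a contradiction from the fact that every element of a free group has finite support. Since $d \in F_\kappa$, one can write $d \in \langle e_{j_1}, \ldots, e_{j_n} \rangle$ for some finite $\{j_1, \ldots, j_n\} \subseteq \kappa$. Because $\bar{e}$ is infinite, I can pick some $k \ge 2$ with $k \notin \{j_1, \ldots, j_n\}$. The tuple $(e_k, e_{j_1}, \ldots, e_{j_n})$ extends to a basis of $F_\kappa$ and is therefore an independent sequence of realizations of $p_0$; by \cite[Corollary 2.7(ii)]{PillayForking}, $e_k$ is independent over $\emptyset$ from $(e_{j_1}, \ldots, e_{j_n})$, hence from $\mathrm{acl}(e_{j_1}, \ldots, e_{j_n})$, hence from $d$. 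Thus $d$ does not fork with $e_k$ over $\emptyset$, contradicting the previous paragraph. There is no real obstacle once the tuples are aligned correctly: the substantive combinatorial content was already packaged in Lemma \ref{WitinF}, and the rest is bookkeeping with invariance of forking and finite support in $F_\kappa$.
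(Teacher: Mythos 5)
Your proof is correct and follows essentially the same route as the paper: both use the sequence from Lemma \ref{WitinF} together with $(e_i)$ as the two tuples realizing $p_0^{(\omega)}$, and both derive the contradiction from the finite support of any candidate $d$ plus invariance of forking. The only cosmetic difference is that you transport the forking statement from $(e_1,B)$ to $(d,\bar e)$ and then contradict it, while the paper transports the non-forking of $d$ from some $e_n$ back to $(e_1,b_n)$; these are the same argument.
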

\begin{proof}
We first note that, by the proof of Lemma \ref{WitinF}, 
$tp(e_1e_2^2,e_1e_2^3e_3^2,e_1e_2^3e_3^3e_4^2,\ldots,e_1e_2^3\ldots$ $e_n^3 e_{n+1}^2,\ldots)=tp(e_1,e_2,e_3,\ldots,e_n,\ldots)=p_0^{(\omega)}$. 
We next show that if we extend 
$tp(e_1e_2^2,e_1e_2^3e_3^2,$ $e_1e_2^3e_3^3e_4^2,\ldots,e_1e_2^3\ldots e_n^3 e_{n+1}^2,\ldots)$ by adding $e_1$, then 
there is no element, $a\in F_{\kappa}$, such that $tp(e_1,e_1e_2^2,e_1e_2^3e_3^2,e_1e_2^3e_3^3e_4^2,\ldots,e_1e_2^3\ldots$ $e_n^3 e_{n+1}^2,\ldots) = tp(a,e_1,e_2,e_3,\ldots,e_n,\ldots)$. 

Suppose, for the sake of contradiction, that such an element exists. Then 
there is $n<\omega$, such that $a$ is independent from $e_n$ over $\emptyset$ (that is because 
$a\in acl(A)$, where $A$ is a finite subset of $\{e_i : i<\kappa\}$ ). But, by forking invariance, 
as $tp(a,e_n)=tp(e_1,e_1e_2^3\ldots e_n^3 e_{n+1}^2)$, 
we have that 
$e_1$ is independent from $e_1e_2^3\ldots e_n^3 e_{n+1}^2$, which is, as in the proof of Lemma \ref{WitinF}, a contradiction.
\end{proof}

\bibliography{biblio}
\ \\ \\
University of Leeds,\\
School of Mathematics,\\
LS2 9JT, UK\\
rsklinos@maths.leeds.ac.uk

\end{document}